\theoremstyle{plain}
\newtheorem{thm}{Theorem}[section]
\newtheorem{lem}[thm]{Lemma}
\newtheorem{prop}[thm]{Proposition}
\newtheorem{cor}[thm]{Corollary}
\theoremstyle{definition}
\newtheorem{definition}[thm]{Definition}
\newtheorem{example}[thm]{Example}
\newtheorem{notation}[thm]{Notation}
\newtheorem{bigthm}{Theorem}
\theoremstyle{remark}
\newtheorem{remark}[thm]{Remark}
\tikzset{>=latex}
\DeclareMathOperator{\End}{End}
\DeclareMathOperator{\GL}{GL}
\DeclareMathOperator{\Spec}{Spec}
\DeclareMathOperator{\Rank}{rank}
\DeclareMathOperator{\U}{U}
\def\:{\colon\!}
\newcommand{\scr}{\mathscr}
\providecommand{\twomat}[4]{\left(\begin{matrix}#1&#2\\#3&#4\end{matrix}\right)}
\begin{document}

\title[Rational Local Unitary Invariants of Symmetrically Mixed States]{Rational local unitary invariants of symmetrically mixed states of two qubits }

\author[L.~Candelori]{Luca Candelori$^\ast$}

\author[V.~Y.~Chernyak]{Vladimir Y. Chernyak$^{\ast,\dagger}$}

\author[J.~R.~Klein] {John R. Klein$^\ast$}

\author[N.~Rekuski]{Nick Rekuski$^\ast$}

\thanks{\hskip -.38in{}{\tiny{\sc {$^\ast$}Department of Mathematics, Wayne State University, 656 W. Kirby St, Detroit, MI, 48202 USA}}}
\thanks{\hskip -.38in{}{\tiny{\sc {$^\dagger$}Department of Chemistry, Wayne State University, 5101 Cass Ave, Detroit, MI, 48202 USA}}}
\date{\today}

\subjclass[2020]{81P40, 81P42, 13A50, 14L24}
\keywords{entanglement, qubit, invariant, mixed state.}
\begin{abstract}
We compute the field of rational local unitary invariants for locally maximally mixed states and symmetrically mixed states of two qubits. In both cases, we prove that the field of rational invariants is purely transcendental. We also construct explicit geometric quotients and prove that they are always rational.  All the results are obtained by working over the field of real numbers, employing methods from classical and geometric invariant theory over arbitrary fields of characteristic zero. 

\end{abstract}

\maketitle

\setcounter{tocdepth}{1}
{
  \hypersetup{linkcolor=olive}
  \tableofcontents
}

\addcontentsline{file}{sec_unit}{entry}

\section{Introduction} 

\subsection{Background} 
Let $X$ be an affine algebraic variety over a field $k$ that is equipped with an action of a reductive $k$-group $G$.
  Then one may consider $k[X]^{G}$, the algebra of polynomial invariants of the coordinate ring $X$. Unfortunately, the problem of finding a reasonable description of
   $k[X]^{G}$ in terms of generators and relations is typically intractable, despite the recent availability of powerful computational tools.
By contrast, if we pass to the field of {\it rational invariants} the situation often becomes much simpler.
  For example, when $X = \mathbb{A}^{n}$ is affine $n$-space over a field $k$ of characteristic zero, then
$k[\Bbb A^n]^{G}$ may be especially complicated, but in many instances  $k(\Bbb A^n)^{G}$ is a purely transcendental extension of $k$~\cite{dolgachev, CT-S}.
  
A case that naturally arises in quantum mechanics, as well as in quantum computing, is when 
  \begin{itemize}
    \item
      $k=\mathbb{R}$,
    \item
      $X$ is the real affine space of trace one Hermitian (i.e., self-adjoint) operators on the tensor product
       $V_1\otimes V_2$, where $V_1, V_2$ are two-dimensional complex inner product spaces, and
    \item
      $G = \U(V_1) \times \U(V_2)$ is the product of unitary groups acting by conjugation on $X$.
  \end{itemize}
 Elements of the field of rational invariants $k(X)^{G}$ in this case provide a measure of the {\it quantum entanglement} of {\it mixed states} of {\it  two qubits}.
 
The main goal of this paper is to give an explicit presentation for the rational field of invariants for the subspaces of {\em locally maximally mixed states} and {\em symmetrically mixed states}. From a higher perspective, our goal is to lay the foundations for a study of rational local unitary invariants by introducing methods from algebraic geometry and invariant theory. The full calculation of $k(X)^G$ will be described in the forthcoming paper \cite{SecondPaper} by employing the tools developed in this article.

To summarize our results and previous work on the subject, we first fix some notation and terminology. Suppose $\{V_{i}\}_{i=1}^{n}$ are complex $2$-dimensional inner product spaces.
  A \emph{pure state} of {\it $n$-qubits} is a unit vector in the tensor product  $V:=\bigotimes_{i=1}^{n}V_{i}$
  (when $n=2$ we often refer to $2$-qubits as a {\it two qubits}).
  Given a pure state $\psi$, we have an associated projection operator 
  \[
  P_\psi \in \operatorname{End}(V)
  \] 
  defined by 
  $P_\psi(v) = \langle \psi,v\rangle \psi$; this is trace one and Hermitian.
  A convex combination of such projections is called a \emph{mixed state}.  The space of mixed states is not an algebraic
  variety. To obtain a variety we relax convexity and instead consider all real linear combinations of such projections having trace one. This results in 
 the {\it Liouville space} 
 \[
 \scr L \subset \operatorname{End}(V) , 
 \]
  i.e., the real affine space of  trace one Hermitian operators on $V$. 
  
  Let $G = \prod_i \U(V_i)$; this is the {\it local unitary group} of $V$. Note that $G$ acts on $\scr L$ by conjugating operators. Then
  $\Bbb R[\scr L]^G$ is the ring of {\it local unitary polynomial invariants} of mixed states of $n$-qubits.  Note that 
  the $G$-equivariant embedding $V\to \scr L$ induces a homomorphism 
   \[
   \Bbb R[\scr L]^G \to  \Bbb R[V]^G 
   \]
  from the ring of invariants of mixed states to the ring of invariants of pure states.  
  One may also consider $\Bbb R(\scr L)^G$, the field of {\it local unitary rational invariants} of mixed states of $n$-qubits.
   
  \subsection{Historical overview}
  Mixed states of two qubits are used to model errors occurring in faulty quantum channels, so this case is of particular interest in the field of quantum information science~\cite{NielsenChuang}.
  Furthermore, calculating local unitary invariants is important to understand {\it entanglement} of mixed states~\cite{Horodecki}.
  Indeed, local unitary operators correspond physically to quantum evolution that can be applied independently on each system $V_i$.
  Therefore two states that are in the same orbit under local unitary operators can be thought of as having the same level of entanglement.
  Classifying these orbits under local unitary operators is thus equivalent to classifying all the possible levels of entanglement in a composite quantum system.

  Because of the importance of these applications, the ring of local unitary invariants has received considerable attention in recent years in both the mathematical and physical literature.
  We give a brief historical overview:
  \begin{itemize}
    \item{
      Brylinski gave partial results in the case of 3 and 4 pure qubits~\cite{Brylinski},
    }
    \item{
      Meyer and Wallach calculate the Hilbert series in the case of pure qubits and then calculate the invariants in the case of three and four pure qubits~\cite{Wallach-Meyer},
    }
    \item{
      In his book, Wallach recalls several earlier results in the pure qubit case (previously found by Wallach and collaborators) then gives a preliminary study of the mixed qubit state~\cite{Wallach-book},
    }
    \item{
      King, Welsh, and Jarvis give a full presentation of complex local unitary invariants of two mixed qubits~\cite{King}, and
    }
    \item{
      Gerdt, Khvedelidze, and Palii calculate a full presentation for the subspace of `X-states' of two mixed qubits---an important subclass of mixed states~\cite{X-states}.
    }
  \end{itemize}
  However, very little is known about the case of three or more mixed states. It is also worth noting that the above authors work primarily over the algebraically closed field of complex numbers
  rather than over the real numbers.

  In principle, there are general algorithms to calculate generators and relations for the algebra of local unitary invariants~\cite{CIT}.
  However, as the complexity of the problem increases (e.g., increasing the number of qubits) it is impractical to apply these algorithms directly.
  To wit, the authors attempted to replicate the calculations of \cite{King} (the case of two qubits) using the \texttt{InvariantRing} package~\cite{invariantRing} in Macaulay2~\cite{M2} but our system ran out of memory after several hours.
   Moreover, even when a full presentation is given, the explicit generators and relations are too complex to provide meaningful information about the classification of entanglement. 
  To this end, in this article we present several new approaches to the study of local unitary invariants of mixed states of qubits: we consider the rational local unitary invariants, we work with Bloch matrices, and we work exclusively over $\mathbb{R}$.
  
  \subsection{Main results} 
Our focus will be on the rationality question for the field of rational local unitary invariants in the two qubit case. To the best of our knowledge, this article represents the first study in this direction. For simplicity and ease of exposition, we first consider the special families of mixed states given by \emph{locally maximally mixed states} (Section \ref{section:LMMstates}) and \emph{symmetrically mixed states} (Section \ref{section:symStates}), both which are interesting in their own right.
  Broadly, locally maximally mixed states are mixed states where each qubit is as far from being a pure state as possible (Definition \ref{definition:locallyMaximallyMixed}).
  Likewise, symmetrically mixed states are mixed states where the qubits are indistinguishable from each other.
  In both cases, we show that the field of real rational local unitary invariants is  purely transcendental, and we provide explicit expressions for a set of generators.

  To state our results, let $U_2$ be the group of complex unitary $2\times 2$ matrices. In the locally maximally mixed state case the situation is fairly simple. We can actually calculate the ring of polynomial invariants and from there we obtain the following complete description of the field of rational invariants (cf.~Theorem \ref{thm:LMM-invariants} and Corollary  \ref{corollary:rationalInvLMM}):
  
  \begin{bigthm}[LMM States] 
    \label{thm:lmmStatesLetter}
    Let $ \scr L _{M}$ be the affine variety of locally maximally mixed states of two qubits.
    Then there is an isomorphism of graded  $\Bbb R$-algebras
    \[
    \mathbb{R}[ \scr L _{M}]^{ \U_2\times  \U_2} \, \, \cong \,\, \mathbb{R}[t_{2},t_{3},t_{4}]\, ,
    \] where $t_{i}$ is of degree $i$.
    The field $\mathbb{R}( \scr L _{M})^{ \U_2\times  \U_2}$ is the fraction field of $\mathbb{R}[ \scr L _{M}]^{ \U_2\times  \U_2}$, therefore purely transcendental of degree $3$,  generated by $t_2, t_3, t_4$. 
  \end{bigthm}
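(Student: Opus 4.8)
The plan is to pass to Bloch coordinates and reduce the statement to a concrete linear-algebra problem. Writing a locally maximally mixed state in the Pauli basis $\{\sigma_i\otimes\sigma_j\}$, the requirement that both partial traces be maximally mixed kills the two single-qubit Bloch vectors, so $\scr L_M$ is identified with the affine space $M_3(\mathbb{R})\cong\mathbb{A}^9$ of correlation (``Bloch'') matrices $R=(R_{ij})_{i,j=1}^3$. Under the surjection $\U_2\to\mathrm{SO}(3)$, $U\mapsto O_U$ determined by $U\sigma_iU^{-1}=\sum_k(O_U)_{ki}\sigma_k$, conjugation by $U_1\otimes U_2$ acts by $R\mapsto O_{U_1}\,R\,O_{U_2}^{\mathsf T}$. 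Since $\U_2\to\mathrm{SO}(3)$ is onto, this gives
\[
\mathbb{R}[\scr L_M]^{\U_2\times\U_2}=\mathbb{R}[M_3]^{\mathrm{SO}(3)\times\mathrm{SO}(3)},
\]
with $\mathrm{SO}(3)\times\mathrm{SO}(3)$ acting by left--right multiplication. This reduction is routine and is presumably set up in the earlier Bloch-matrix sections; I would simply record it as the starting point.

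The core of the argument is to compute this invariant ring by slicing with diagonal matrices. By the real singular value decomposition, every $R$ can be written $R=O_1\,D\,O_2^{\mathsf T}$ with $O_1,O_2\in\mathrm{SO}(3)$ and $D=\mathrm{diag}(\lambda_1,\lambda_2,\lambda_3)$; because $\det O_i=1$, the sign of $\det R=\lambda_1\lambda_2\lambda_3$ is preserved, so one of the $\lambda_i$ is allowed to be negative. I would then observe that (i) every orbit meets the space of diagonal matrices, and (ii) two diagonal matrices are $\mathrm{SO}(3)\times\mathrm{SO}(3)$-equivalent precisely when they differ by the group $W=W(D_3)\cong S_4$ of signed permutation matrices of determinant $1$ — arbitrary permutations of the $\lambda_i$ together with an \emph{even} number of sign changes. (Determinant preservation is exactly what forbids odd sign changes, while $90^\circ$ and $180^\circ$ rotations realize the transpositions and the even sign flips inside $\mathrm{SO}(3)$.) Restriction to the diagonal then produces an injection
\[
\mathbb{R}[M_3]^{\mathrm{SO}(3)\times\mathrm{SO}(3)}\hookrightarrow \mathbb{R}[\lambda_1,\lambda_2,\lambda_3]^{W}.
\]

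To finish the first part I would invoke Chevalley--Shephard--Todd: $W=W(D_3)$ is a finite reflection group, so $\mathbb{R}[\lambda_1,\lambda_2,\lambda_3]^{W}$ is a polynomial ring whose fundamental invariants have degrees $\{2,3,4\}$, for instance $p=\lambda_1^2+\lambda_2^2+\lambda_3^2$, $\det=\lambda_1\lambda_2\lambda_3$, and $q=\lambda_1^2\lambda_2^2+\lambda_1^2\lambda_3^2+\lambda_2^2\lambda_3^2$. The key point is that these are the restrictions of the \emph{global} invariants $t_2=\operatorname{tr}(R^{\mathsf T}R)$, $t_3=\det R$, and $t_4=\tfrac12\bigl((\operatorname{tr} R^{\mathsf T}R)^2-\operatorname{tr}((R^{\mathsf T}R)^2)\bigr)$, which are manifestly fixed by $R\mapsto O_1RO_2^{\mathsf T}$ and homogeneous of degrees $2,3,4$ in the $R_{ij}$. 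Hence the displayed injection is also surjective, so it is an isomorphism and $\mathbb{R}[M_3]^{\mathrm{SO}(3)\times\mathrm{SO}(3)}\cong\mathbb{R}[t_2,t_3,t_4]$ as graded $\mathbb{R}$-algebras. For the field statement I would show that the morphism $R\mapsto(t_2,t_3,t_4)\colon M_3\to\mathbb{A}^3$ is a geometric quotient over a dense open set: a generic triple fixes the eigenvalues of $R^{\mathsf T}R$ and the sign of $\det R$, hence the signed singular values up to $W$, and since all of $W$ is realized inside the \emph{real} group $\mathrm{SO}(3)\times\mathrm{SO}(3)$ the generic fibre is a single real orbit. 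By Rosenlicht's theorem (in the characteristic-zero form developed earlier) the rational invariants separate generic orbits, giving $\mathbb{R}(\scr L_M)^{\U_2\times\U_2}=\mathbb{R}(t_2,t_3,t_4)=\operatorname{Frac}\mathbb{R}[t_2,t_3,t_4]$, purely transcendental of degree $3$.

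I expect the main obstacle to be the surjectivity of the restriction map, equivalently pinning down the residual group as exactly $W(D_3)$ rather than a larger or smaller subgroup. The determinant-preservation argument ruling out odd sign changes, together with the explicit $\mathrm{SO}(3)$ rotations realizing the even signed permutations, are the delicate real points; once this is correct, the degree count $\{2,3,4\}$ and the explicit lifts $t_2,t_3,t_4$ make both the polynomial and the field statements fall out. A secondary subtlety is verifying that the generic fibre is a single orbit \emph{over $\mathbb{R}$}, and not merely after base change to $\mathbb{C}$, which is precisely what forces the real field of rational invariants to coincide with the fraction field of the ring of invariants.
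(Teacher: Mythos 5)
Your treatment of the ring isomorphism is essentially the paper's own argument: reduce to Bloch coordinates so that $\scr L_M$ becomes $M_3(\mathbb{R})$ with the left--right $\operatorname{SO}_3(\mathbb{R})\times\operatorname{SO}_3(\mathbb{R})$ action, slice by diagonal matrices using the real singular value decomposition, identify the residual finite group of order $24$, compute its invariant ring with fundamental degrees $2,3,4$, and observe that those invariants are restrictions of the global $t_2,t_3,t_4$ (compare Proposition \ref{prop:relativeSectionLMM} and Theorem \ref{thm:LMM-invariants}; the paper verifies polynomiality of $\mathbb{R}[S]^{W(S)}$ by the Jacobian-plus-degree-product criterion of \cite[Thm. 3.9.4]{CIT} rather than by quoting Chevalley--Shephard--Todd, and takes $t_4=\tr (CC^t)^2$ instead of your second elementary symmetric function of the squared singular values---equivalent choices). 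One slip to fix: your description of the residual group is internally inconsistent. The correct effective group on $(\lambda_1,\lambda_2,\lambda_3)$ is the one you describe in words---all permutations combined with an \emph{even} number of sign changes, i.e. $W(D_3)$---but that is \emph{not} the group of signed permutation matrices of determinant $1$ (a transposition with no sign change has determinant $-1$); the determinant-one group is a different index-two subgroup of the hyperoctahedral group, abstractly isomorphic but acting differently (for instance $\lambda_1\lambda_2\lambda_3$ is not invariant under it). Since your actual computation uses the correct action, this does not propagate, but the labels should be reconciled.

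The genuine gap is in your argument for the field statement. A geometric quotient in the sense used here, and Rosenlicht's theorem, are statements about \emph{geometric} points: what must be verified is that a generic fibre of $(t_2,t_3,t_4)$ is a single $\mathrm{SO}_3(\mathbb{C})\times\mathrm{SO}_3(\mathbb{C})$-orbit of \emph{complex} matrices. Your sketch only establishes the real statement, via the real SVD, and your closing remark gets the logic backwards: single-orbit fibres over $\mathbb{R}$ are neither what the machinery consumes nor sufficient on their own, since a morphism injective on real points need not be birational (two complex-conjugate orbits can carry the same real invariant values). Over $\mathbb{C}$ the real SVD is unavailable; one needs the orthogonal SVD for complex matrices, which exists only under the hypothesis that $CC^t$ has distinct eigenvalues---this is exactly the role of the paper's Lemma \ref{lemma:SVDcomplex} and of the open set $X_0$ in Theorem \ref{thm:geometricQuotientLMM}. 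Once the complex statement is in hand, the real field equality follows because your generators are defined over $\mathbb{R}$. Alternatively, you can bypass orbit separation entirely, as the paper does in Corollary \ref{corollary:rationalInvLMM}: since $W(S)$ is finite, $\mathbb{R}(S)^{W(S)}$ is automatically the fraction field of $\mathbb{R}[S]^{W(S)}$ (Remark \ref{remark:fractionFields}), and the relative-section isomorphism of Theorem \ref{thm:relativeSectionsAreBirational} transports this to $\mathbb{R}(\scr L_M)^{\U_2\times\U_2}$, with no quotient geometry needed.
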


  For symmetrically mixed states, $V_1 = V_2$ and the group $G$ is given by one copy of  $ \U_2$. This case is more complex than the LMM case, and we prove rationality for this field of invariants without calculating the ring of polynomials invariants: 
   
  \begin{bigthm}[Symmetrically Mixed States]
    \label{thm:symStatesLetter}
    Let $ \scr L _{S}$ be the affine variety of symmetrically mixed states of two qubits. Then the field
    $\mathbb{R}( \scr L _{S})^{ \U_2}$ is purely transcendental of degree $6$.
  \end{bigthm}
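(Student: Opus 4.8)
The plan is to exploit the exchange symmetry to reduce the problem to a $3\times 3$ Hermitian matrix acted on by $\mathrm{SO}(3)$, pass to a rational slice that replaces $\mathrm{SO}(3)$ by a finite group, and then perform a multiplicative change of variables that linearizes the resulting quotient so that the no-name lemma applies. First I would use the swap involution on $V\otimes V$ to split $V\otimes V = \Sym^2 V \oplus \wedge^2 V$, of complex dimensions $3$ and $1$. A symmetrically mixed state commutes with the swap, hence is block diagonal $\rho = \rho_S \oplus \rho_A$ with $\rho_S \in \Herm(\Sym^2 V)$ and $\rho_A \in \mathbb{R}$; the trace-one condition forces $\rho_A = 1 - \operatorname{tr}\rho_S$, so $\scr L_S$ is identified with the $9$-dimensional real affine space $\Herm_3$ of $3\times 3$ Hermitian matrices. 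The group $\U_2$ acts on $\Sym^2 V$ through $\Sym^2\colon \U_2 \to \U(\Sym^2 V)$; its central $\U(1)$ acts by scalars, hence trivially by conjugation, while $\mathrm{SU}(2)$ surjects onto $\mathrm{SO}(3)$, so the induced action on $\Herm_3$ is by the real orthogonal group $\mathrm{SO}(3)$. Writing $\rho_S = A + iB$ with $A$ real symmetric and $B$ real antisymmetric, this is simultaneous conjugation $(A,B)\mapsto (RAR^{\top}, RBR^{\top})$. Thus $\mathbb{R}(\scr L_S)^{\U_2} \cong \mathbb{R}(\Herm_3)^{\mathrm{SO}(3)}$, and since a generic orbit is $3$-dimensional the transcendence degree is $9-3 = 6$, as claimed.

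Next I would diagonalize $A$. For generic $A$ with distinct eigenvalues $d_1, d_2, d_3$, the subgroup of $\mathrm{SO}(3)$ preserving the diagonal locus is the group $N$ of determinant-one signed permutation matrices, of order $24$; it permutes the $d_i$ and acts on the vector $(b_1, b_2, b_3)$ attached to $B$ by $b_i \mapsto \epsilon_i b_{\sigma^{-1}(i)}$ with $\prod_i \epsilon_i = \operatorname{sgn}(\sigma)$. Since a generic orbit meets this slice in a single $N$-orbit, a rational slice (relative section) argument gives $\mathbb{R}(\Herm_3)^{\mathrm{SO}(3)} \cong \mathbb{R}(d_1, d_2, d_3, b_1, b_2, b_3)^{N}$, reducing the rationality question to a finite-group quotient of affine $6$-space.

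The heart of the argument, and the step I expect to be the main obstacle, is the rationality of this order-$24$ quotient. The subgroup $V \triangleleft N$ of even sign changes (a Klein four-group) acts only on the $b_i$, with $N/V \cong S_3$. I would first compute $\mathbb{R}(b)^V = \mathbb{R}(b_1^2, b_2^2, b_3^2, b_1 b_2 b_3)$ and observe that the multiplicative substitution $u_i := b_1 b_2 b_3 / b_i^2$ trivializes the single relation $(b_1 b_2 b_3)^2 = b_1^2 b_2^2 b_3^2$, yielding $\mathbb{R}(d, b)^V = \mathbb{R}(d_1, d_2, d_3, u_1, u_2, u_3)$. The decisive feature is that this substitution \emph{linearizes} the residual action: $S_3$ permutes the $d_i$ by the standard permutation representation and sends $u_i \mapsto \operatorname{sgn}(\sigma)\, u_{\sigma^{-1}(i)}$, so the $u_i$ transform in the tensor product of the permutation and sign representations. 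Without it the slice presents the field as a quadratic extension of a rational field by $\sqrt{\operatorname{disc}(d)\cdot b_1^2 b_2^2 b_3^2}$, whose rationality is not transparent; passing to the coordinates $u_i$ is exactly what removes the square root, and identifying such a change of variables is the nontrivial part of the argument.

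Finally I would apply the no-name lemma. Since $S_3$ acts generically freely on $\mathbb{A}^3$ with purely transcendental quotient $\mathbb{R}(e_1, e_2, e_3)$ (the elementary symmetric functions of the $d_i$), and since $(u_1, u_2, u_3)$ spans a linear $S_3$-representation, the quotient $(\mathbb{A}^3_d \times \mathbb{A}^3_u)/S_3$ is birational to $(\mathbb{A}^3_d/S_3) \times \mathbb{A}^3_u \cong \mathbb{A}^6$. Hence $\mathbb{R}(d, u)^{S_3}$, and with it $\mathbb{R}(\scr L_S)^{\U_2}$, is purely transcendental of degree $6$. All the reductions and the no-name lemma are valid in characteristic zero, so the conclusion holds over $\mathbb{R}$ as required.
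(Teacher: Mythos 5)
Your proposal is correct, and it takes a genuinely different route at the decisive step. Up to the middle, the two arguments coincide in substance: your decomposition $V\otimes V=\Sym^2 V\oplus \wedge^2 V$, $\rho=\rho_S\oplus\rho_A$, is an intrinsic repackaging of the paper's Bloch-matrix computation (both identify $\scr L_S$, as an $\operatorname{SO}_3(\mathbb{R})$-space, with $\mathbb{R}^3\oplus \mathrm{Sym}_3(\mathbb{R})$), and your diagonalizing slice with the order-$24$ group $N$ of determinant-one signed permutations is exactly the paper's relative section $S$ with Weyl group the chiral octahedral group (Theorem~\ref{thm:relativeSectionSymmetricStates}). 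The divergence is in how the finite quotient $\mathbb{R}(d,b)^{N}$ is shown to be rational. The paper applies its No-Name Corollary~\ref{cor:rationality} with the \emph{vector} part $b$ as the almost-free factor, which forces it to prove that $\mathbb{R}(b)^{N}$ is purely transcendental; that is Lemma~\ref{lem:rationalityOctahedral}, proved via Castelnuovo's theorem together with the explicit octahedral invariants $p_1,\dots,p_4$ and the relation $p_4^2=P_9(p_1,p_2,p_3)$. The paper cannot instead put the $d$-part in the almost-free slot, because the Klein four-group of even sign changes inside $N$ fixes the $d$'s pointwise. Your monomial substitution $u_i=b_1b_2b_3/b_i^2$ removes precisely this obstruction: it computes the quotient by that Klein four-group explicitly, and---as you correctly verify---linearizes the residual $S_3$-action ($u_i\mapsto \mathrm{sgn}(\sigma)\,u_{\sigma^{-1}(i)}$, independently of the choice of lift), after which the No-Name Lemma applies with the permutation action on the $d$'s as the almost-free factor, whose rational quotient is just the field of symmetric functions. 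Your route is therefore more elementary at this step (no Castelnuovo, no octahedral invariant theory), and your almost-free factor is the pure permutation action, which is genuinely free where the $d_i$ are distinct---incidentally avoiding the paper's slightly inaccurate claim that the octahedral action on the $v$'s is free on $\prod_{i<j}(v_i-v_j)\neq 0$ (that open set must be shrunk further, e.g.\ to vectors with nonzero entries of distinct absolute values, though almost-freeness is unaffected). What the paper buys with the harder Lemma~\ref{lem:rationalityOctahedral} is explicitness: the generators $X,Y,Z$ lift through the section to the concrete generating set of Theorem~\ref{thm:effectiveRationalitySymmetricStates}, whereas your three generators beyond $e_1(d),e_2(d),e_3(d)$ come from the no-name trivialization and are not immediately written down. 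One point you elide, which the paper handles carefully: the relative-section property must be verified on geometric points over $\bar{k}=\mathbb{C}$, where symmetric matrices are not always orthogonally diagonalizable; this works because matrices with distinct eigenvalues are (cf.\ the proofs of Theorems~\ref{thm:relativeSectionSymmetricStates} and~\ref{thm:delta-iso}), so your ``generic orbit meets the slice in a single $N$-orbit'' assertion is correct but does require that check.
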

  
  Further, we make Theorem B effective in Section \ref{subsec:effective} and explain how to calculate a set of six generators explicitly.  

  The proof of each of the above results employs the {\em Bloch matrix} model of mixed states of two qubits \cite{Gamel}, which is built out of 1- and 2-point correlation functions, measuring expected values of spin measurements along the three different axes.
  We review this construction in Section \ref{section:Bloch_matrix}.
  This model is a generalization of the popular {\em Bloch sphere} model of pure qubits, and it lends itself particularly well to the study of local unitary invariants.
  While the Bloch model is used implicitly in previous work (e.g., \cite{King}) we believe that this is the first article which explicitly employs it to obtain results about local unitary invariants.
  In particular, using the Bloch model we construct in Theorem \ref{thm:geometricQuotientSym} explicit affine geometric quotients (in the sense of \cite{GIT}) for the 
  action of the local unitary group on the variety of symmetric mixed states of two qubits.
  Furthermore, we show (cf.~Example \ref{example:restrictionCalc}) how these geometric quotients can be used, in conjunction with computer-aided calculations and the graphical calculus of \cite{King}, to provide explicit generators for the (purely transcendental) field of rational invariants.
  
 Compared to previous work, in this article we work exclusively over the ground field $k= \mathbb{R}$, and we avoid base-change to the field of complex numbers. As shown in \cite{GIT} and \cite{CT-S}, many results of invariant theory over the the field of complex numbers go through for an arbitrary field of characteristic zero, at the price of replacing standard arguments from classical algebraic geometry (\cite{PV}) with a more scheme-theoretic approach (\cite{CT-S}). Working over the real numbers and expressing the rational invariants in terms of the Bloch matrix makes our work immediately ready to be implemented at the experimental level. 
 
For the sake of brevity we omitted in this article the computation of the field of rational invariants for the full space of two mixed qubits. This field can be calculated  by similar methods (it is also purely transcendental) but the modifications required are lengthy enough that merit their own separate treatment, which we provide in \cite{SecondPaper}.  
 
We believe that this new approach to local unitary invariants is relevant for practical applications. Physicists are not necessarily committed to polynomial invariants: important local unitary invariants such as entropy and concurrence are not polynomials. Therefore, it is likely that a complete classification of the rational local unitary invariants could lead to meaningful applications in quantum physics.

\subsection*{Acknowledgements}
  The authors are supported by the U.S. Department of Energy, Office of Science, Basic Energy Sciences, under Award Number DE-SC-SC0022134.
  The last author is also partially supported by an OVPR Postdoctoral Award at Wayne State University. 

\section{Quotients, sections and rationality}
\label{sec:sections}
  We begin by collecting some necessary results from invariant theory.
  In particular, we need results concerning the rationality of fields of rational invariants, in addition to computational methods to calculate rational invariants explicitly (i.e., the method of \emph{relative sections}).
  When the ground field is of characteristic zero (but not necessarily algebraically closed) these results appear in \cite{CT-S}.
  Since our applications are when $k = \mathbb{R}$, this assumption is essential.
  For the algebraically closed case, the older text of Popov and Vinberg \cite{PV} can also serve as reference. 

\subsection{Algebraic, rational and geometric quotients}
  Suppose $k$ is a field of characteristic zero.
  Let $X/k$ be a geometrically integral affine variety and $G/k$ a reductive algebraic group acting (algebraically) on $X$.
  We write the coordinate ring of $X$ by $k[X]$.
  We denote the ring of polynomial $G$-invariant function on $X$ by $k[X]^G$, so $k[X]^{G}$ is a $k$-subalgebra of $k[X]$.
  Similarly, we write the field of rational $G$-invariant functions by $k(X)^{G}$.
  The rings $k[X]^{G}$ and $k(X)^{G}$ arise as functions on different variations of quotients.

  \begin{definition}
    \label{definition:algebraicQuotient}
    We define the \emph{algebraic quotient} of $X$ via $G$, written $X/\!/ G$, to be the affine scheme $\Spec(k[X]^G)$. 
  \end{definition}

  By construction, polynomial invariants $k[X]^G$ are polynomial functions on the algebraic quotient $X/\!/ G$.
  Furthermore, since $G$ is reductive, by \cite[Thm. 2.2.10]{CIT}, $k[X]^G$ is finitely generated and so $X/\!/ G$ is of finite type.
  In other words, $X/\!/ G$ is an integral $k$-variety.
  By \cite[Def. 0.7]{GIT}, the affine quotient is a {\em categorical quotient} and so $X/\!/ G$ is unique up to unique isomorphism.

  One may hope $k(X)^{G}$ is the fraction field of $X/\!/ G$, but this is not the case in general (c.f. Remark \ref{remark:fractionFields}).
  For example, if $X=\mathbb{A}^{2}_{k}$ and $\mathbb{G}_{m}$ acts diagonally then $(k[X]^{G})_{(0)}=k$ while $k(X)^{G}=k[t]$.
  Therefore, $k(X)^{G}$ cannot always be interpreted as rational functions on $X/\!/ G$.
  
  Since $k(X)^{G}$ is a subfield of $k(X)$, $k(X)^{G}$ is finitely generated.
  Thus, let $y_1, y_{2}, \ldots, y_n$ be generators for $k(X)^G$ as a $k$-algebra.
  Then $Y=\Spec( k[y_1, \ldots, y_n])
$ is a {\em model} for $k(X)^G$, that is, $Y$ is a geometrically integral affine algebraic $k$-variety such that $k(Y)=k(X)^G$.
  We use this property to give our second variation of a quotient.

  \begin{definition}
    A $k$-variety $Y$ is a \emph{rational quotient} of $X$ by $G$ if there is an isomorphism $k(Y)\cong k(X)^G$. 
  \end{definition}
  
  By construction, rational $G$-invariant functions are rational functions on a rational quotient $Y$.
  However, in contrast to the case of the algebraic quotient, there is no canonical choice of rational quotient $Y$, since it is only defined up to birational equivalence. 
Observe that the the injection $k(Y)\cong k(X)^{G}\subseteq k(X)$ induces a dominant rational map $f:X\dashrightarrow Y$.

  The geometric points of an algebraic (or rational) quotient need not coincide with the orbits of the $G$-action.
  However, this is the case for a \emph{geometric quotient}:
  
  \begin{definition}\cite[Def. 2.7]{CT-S}
    A \emph{geometric quotient} of $X$ by $G$ is a $k$-variety $Y$ together with a $k$-morphism $f:X\rightarrow Y$ satisfying 
    \begin{itemize}
      \item[(i)]{
        $f$ is open, constant on $G$-orbits, and induces a bijection $X(\bar{k})/G(\bar{k})\leftrightarrow Y(\bar{k})$
      }
      \item[(ii)]{
        for any open set $V\subseteq Y$, the natural morphism $k[V] \rightarrow k[f^{-1}(V)]^G$ is an isomorphism.
      }
    \end{itemize}
    where $\overline{k}$ is an algebraically closed field containing $k$.
    
    If a geometric quotient exists, we write it as $Y=X/G$.
  \end{definition}
  
  A geometric quotient is always a categorical quotient \cite[Prop. 0.1]{GIT} so $X/G$ is unique up to unique isomorphism.

  Even in our case when $X$ is affine and $G$ reductive, a geometric quotient need not exist (e.g., $\mathbb{A}^{2}_{k}$ with the diagonal action of $\mathbb{G}_{m}$).
  However, by Rosenlicht's theorem, we may choose a $G$-invariant dense open subset $U\subseteq X$ admitting a geometric quotient $U\rightarrow U/G$~\cite{Rosenlicht}.
  With this choice, the $k$-variety $Y = U/G$ is also a rational quotient of $X$ by $G$, by property (ii) of geometric quotients.
  In other words, we can always choose a rational quotient $Y$ that is also a geometric quotient of a dense open $G$-invariant subset of $X$.
  This idea will be important in our applications, so we make the following definition:

  \begin{definition}
    \label{def:geometricRationalQuotient}
    A \emph{geometric rational quotient} of $X$ by $G$ is a geometric quotient of the form $Y = U/G$, where $U\subseteq X$ is an open dense $G$-invariant subset. 
  \end{definition}

\begin{remark} 
\label{rmk:closedAction}
When $X$ is affine, a geometric quotient $X/G$ exists if and only if the  $G$-action is {\em closed}, that is, the geometric $G$-orbits on $X(\bar{k})$ are closed \cite[Section I.2, Amplification 1.3]{GIT}. In this situation, 
$$
 X/G = X/\!/ G  = \Spec(k[X]^G)
$$
and $X/G$ is also a rational quotient. So all three notions of quotients coincide in this particular case. 
\end{remark}

\subsection{Rationality }
  A fundamental question regarding the field of rational invariants $k(X)^G$ is whether it is \emph{purely transcendental}.
  In other words, whether $k(X)^{G}$ is generated by a finite number of algebraically independent rational functions.
  Geometrically, this is equivalent to asking whether a rational quotient $X\dashrightarrow Y$ is \emph{rational}, that is, birationally equivalent to a projective space $\mathbb{P}_k^n$, for some $n$.
  The monograph \cite{CT-S} contains a detailed account of what is known about this question. For our applications we shall only require a variant of the `No-Name Lemma' (according to Popov and Vinberg \cite{PV} it is difficult to establish the authorship of this result). This lemma gives a criterion for the rationality of a reducible representation in the case when one of its summands has {\em almost free} action, as defined below:
  
    \begin{definition}
    \label{def:almostFree}
    The action of $G$ on $X$ is \emph{almost free} if there exists an open dense $G$-invariant subset $U\subseteq X$ such that for every geometric point $x \in U(\bar{k})$, the geometric stabilizer $G_x \subseteq G(\bar{k})$ is trivial. 
  \end{definition}

  The following versions of the No-Name Lemma appear in \cite{dolgachev} and \cite[2.13]{PV} for the case $k=\bar{k}$ and in \cite{CT-S} for the general case of $k$ a field of characteristic zero. 

  \begin{thm}[{\cite[Cor. 3.8]{CT-S}}]
    \label{thm:noName}
    Let $k$ be a field of characteristic zero and $G$ a reductive $k$-group acting almost freely on an affine $k$-variety $X$.
    Further assume $\overline{X}=X\times_k \Spec(\bar{k})$ is factorial (i.e., $\bar{k}[\overline{X}]$ is a unique factorization domain).
    Let $V$ be a finite-dimensional $k$-vector space together with a linear $G$-action.
    Then the field $k(X\times V)^G$ is purely transcendental over $k(X)^G$, of transcendence degree $\dim V$.
    In particular, if $k(X)^G$ is purely transcendental then $k(X\times V)^G$ is also purely transcendental.

  \end{thm}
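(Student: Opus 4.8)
The plan is to realize $k(X\times V)^G$ as the function field of a vector bundle over a model of $k(X)^G$ and then to exploit the fact that every vector bundle over an integral base is generically trivial. First I would pass to the open dense $G$-invariant subset $U\subseteq X$ on which the action is free, which exists by almost freeness (Definition \ref{def:almostFree}). By Rosenlicht's theorem \cite{Rosenlicht}, after possibly shrinking $U$, there is a geometric quotient $q\colon U\to Y:=U/G$, and by property (ii) of geometric quotients $k(Y)=k(U)^G=k(X)^G$. Since $U\times V$ is an open dense $G$-invariant subset of $X\times V$, restriction of rational functions gives $k(X\times V)^G=k(U\times V)^G$. It therefore suffices to show that $k(U\times V)^G$ is purely transcendental over $k(Y)$ of transcendence degree $\dim V$.

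Next I would build the relevant bundle. The $G$-equivariant projection $\mathrm{pr}\colon U\times V\to U$ is compatible with the linear action on $V$, so the action on $U\times V$ again has trivial geometric stabilizers; after shrinking $U$ once more I may assume this action is free with closed orbits, and Rosenlicht provides a geometric quotient $W:=(U\times V)/G$ with $k(W)=k(U\times V)^G$. The projection $\mathrm{pr}$ descends to a morphism $\pi\colon W\to Y$. The central claim is that, over a dense open $Y_0\subseteq Y$, the map $\pi$ is a rank-$(\dim V)$ vector bundle, namely the bundle $U\times^G V$ associated to the $G$-torsor $U\to Y$ through the linear representation $V$.

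Establishing this claim is the main obstacle. It requires, first, that $U\to Y$ be generically a $G$-torsor, which follows from freeness of the action together with generic flatness and generic smoothness in characteristic zero; and second, that the associated-bundle descent of $V$ produce a genuine locally free sheaf of rank $\dim V$ on $Y_0$ rather than merely a fibration with fibre $V$. This is the step where reductivity of $G$, the characteristic-zero hypothesis, and the factoriality of $\overline X$ all enter: factoriality forces the relevant Picard and character-group obstructions on $\overline X$ to vanish, so that $V$ descends cleanly and $\pi$ acquires a vector-bundle structure. This is precisely the content I would extract from \cite{CT-S}.

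Finally I would conclude by generic triviality. Over the generic point $\eta$ of $Y$ the fibre $W_\eta$ is a $(\dim V)$-dimensional vector space over $k(Y)$; spreading out a basis trivializes $\pi$ over a dense open $Y_0\subseteq Y$, so that $\pi^{-1}(Y_0)\cong Y_0\times\mathbb{A}^{\dim V}_k$. Passing to function fields yields
\[
k(W)=k\bigl(Y_0\times\mathbb{A}^{\dim V}_k\bigr)=k(Y)(t_1,\dots,t_{\dim V}),
\]
which is purely transcendental over $k(Y)$ of transcendence degree $\dim V$. Since $k(W)=k(U\times V)^G=k(X\times V)^G$ and $k(Y)=k(X)^G$, this proves the main assertion. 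The final statement is then immediate, as a purely transcendental extension of a purely transcendental extension of $k$ is again purely transcendental over $k$.
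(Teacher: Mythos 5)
A preliminary remark: the paper never proves Theorem \ref{thm:noName} at all --- it is imported as a black box from \cite[Cor.~3.8]{CT-S} --- so there is no internal proof to compare against, and your attempt must stand on its own. Your skeleton is the standard associated-bundle proof of the no-name lemma, and it is the right strategy: shrink to the free locus, take a Rosenlicht quotient $q\colon U\to Y$ with $k(Y)=k(X)^G$, identify $(U\times V)/G\to Y$ over a dense open with the associated vector bundle $U\times^{G}V$, and finish by generic triviality of vector bundles plus passage to function fields. The genuine gap is that the step you yourself isolate as ``the main obstacle'' --- that $U\to Y$ is generically a $G$-torsor and that the equivariant bundle $U\times V$ descends to a rank-$\dim V$ locally free sheaf on $Y$ --- carries essentially the entire content of the theorem, and you resolve it by declaring it to be ``precisely the content I would extract from \cite{CT-S}.'' Since the statement under proof \emph{is} \cite[Cor.~3.8]{CT-S}, this makes the proposal circular: it reduces the theorem to its own source rather than proving it.

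Separately, your account of where the hypotheses enter is off target. Once $U\to Y$ is known to be an fppf $G$-torsor over a dense open, the descent step is automatic: a $G$-equivariant quasi-coherent sheaf on $U$ is literally a descent datum for $U\to Y$, faithfully flat descent is effective, and local freeness of rank $\dim V$ can be checked after the base change $U\to Y$; no factoriality of $\overline{X}$, no reductivity, and no vanishing of ``Picard and character-group obstructions'' is needed there. The genuinely nontrivial input is the generic torsor property itself (generic flatness, together with the fact that the action map $G\times U\to U\times_{Y}U$ is an isomorphism over a dense open --- this is where characteristic zero does real work), while the factoriality and reductivity hypotheses serve \cite{CT-S} elsewhere in their development (compare the UFD hypothesis in Lemma \ref{lemma:localizationAndInvariants}, which relates localization to invariants), not to make $V$ ``descend cleanly.'' To close the gap you would need to prove the generic torsor statement and execute the descent yourself, or cite a source other than the result being established. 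A smaller wrinkle worth fixing: applying Rosenlicht directly to $U\times V$ produces a geometric quotient of some dense invariant open subset that need not be of product form, so it is cleaner to first shrink $Y$ so that $U\to Y$ is a torsor, define $W:=U\times^{G}V$ by descent, and only then check that $W$ is a geometric (hence rational) quotient of $U\times V$.
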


  \begin{cor}
  \label{cor:rationality}
    Let $k$ be a field of characteristic zero and $G$ a reductive $k$-group acting linearly on a $k$-vector space $W$.
    Suppose that $W = V_1\oplus V_2$ can be decomposed into a direct sum of $G$-invariant subspaces $V_1, V_2$, such that the action of $G$ restricted to $V_1$ is almost free and the field $k(V_1)^G$ is purely transcendental. Then $k(W)^G$ is purely transcendental and of transcendence degree $\dim V_2$ over $k(V_1)^G$. 
  \end{cor}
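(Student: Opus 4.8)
The plan is to derive Corollary \ref{cor:rationality} as an immediate specialization of the No-Name Lemma (Theorem \ref{thm:noName}). The key observation is that the internal direct sum decomposition $W = V_1 \oplus V_2$ of $G$-representations is the same datum as a $G$-equivariant isomorphism of affine $k$-varieties $W \cong V_1 \times V_2$. Consequently $k(W)^G = k(V_1 \times V_2)^G$, and I would set $X := V_1$ (playing the role of the affine variety) and take $V := V_2$ (playing the role of the linear representation) in Theorem \ref{thm:noName}.

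To apply the theorem I must check its three hypotheses for $X = V_1$. First, $V_1$ is a geometrically integral affine $k$-variety, since it is an affine space $\mathbb{A}^{\dim V_1}_k$. Second, the action of $G$ on $V_1$ is almost free by hypothesis. Third, $\overline{X} = V_1 \times_k \Spec(\bar{k})$ must be factorial; but this is automatic, because $\bar{k}[\overline{V_1}]$ is a polynomial ring over $\bar{k}$ and polynomial rings are unique factorization domains. The remaining input of the theorem, namely that $V_2$ is a finite-dimensional $k$-vector space carrying a linear $G$-action, holds by hypothesis. Theorem \ref{thm:noName} then yields that $k(W)^G = k(V_1 \times V_2)^G$ is purely transcendental over $k(V_1)^G$ of transcendence degree $\dim V_2$, which is precisely the relative statement of the corollary.

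It remains to upgrade pure transcendence over $k(V_1)^G$ to pure transcendence over $k$. By hypothesis $k(V_1)^G$ is purely transcendental over $k$, and we have just shown that $k(W)^G$ is purely transcendental over $k(V_1)^G$. Since a purely transcendental extension of a purely transcendental extension is again purely transcendental (adjoining in turn a transcendence basis of $k(V_1)^G$ over $k$ and then one of $k(W)^G$ over $k(V_1)^G$ realizes $k(W)^G$ as a rational function field over $k$), we conclude that $k(W)^G$ is purely transcendental over $k$. There is no serious obstacle here: the entire analytic content — in particular the use of almost freeness and factoriality to run the underlying descent argument — is packaged inside Theorem \ref{thm:noName}, so the only points requiring care are the trivial verification of the factoriality hypothesis for affine space and the transitivity of pure transcendence for the tower $k \subseteq k(V_1)^G \subseteq k(W)^G$.
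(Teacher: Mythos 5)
Your proof is correct and follows exactly the paper's own argument: both set $X = V_1$ and $V = V_2$ in Theorem \ref{thm:noName} and verify factoriality of $\overline{V}_1$ via the fact that a polynomial ring over $\bar{k}$ is a UFD. The only difference is that you spell out the transitivity of pure transcendence at the end, which the paper absorbs into the final sentence of Theorem \ref{thm:noName} itself.
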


\begin{proof}
This follows from Thm. \ref{thm:noName} by setting $X = V_1$ and $V = V_2$. Indeed $\overline{V}_{1}$ is factorial, since the coordinate ring of $\overline{V}_{1}$ is a polynomial ring in finitely many variables over the field $\bar{k}$, and therefore a unique factorization domain.
\end{proof}

\subsection{Sections}
  The calculation of the field of rational invariants can be simplified by finding a \emph{relative section} $S\subseteq X$ (also called a \emph{slice}).
  When $k$ is algebraically closed and of characteristic zero this notion can be found in \cite[Section 2]{PV}.
  Here we present a generalization to the case where $k$ is an arbitrary field of characteristic zero which is discussed in \cite[Section 3.1]{CT-S}.
  As in the previous section, let $k$ be a field of characteristic zero, $X$ a geometrically integral affine $k$-variety, and $G$ a reductive $k$-group acting on $X$.

  \begin{definition}
    \label{def:relativeSection}
    A geometrically integral $k$-subvariety $S\subseteq X$ is a \emph{ relative section} (or \emph{slice}) for the $G$-action if there exists a $G$-invariant dense open subset $X_0 \subseteq X$ satisfying the following properties:
    \begin{itemize}
      \item[(i)]{
        The Zariski closure of the orbit space $GS$ is all of $X$ (that is, $\overline{GS} = X$),
      }
      \item[(ii)]{
        Let $N=N(S)=\{g\in G:gS \subseteq S\}$ be the normalizer of $S$, and let $S_0=X_0\cap S$.
        Then $gS_0(\bar{k})\cap S_0(\bar{k})\neq\emptyset$ implies that $g\in N(\bar{k})$.
      }
    \end{itemize}
  \end{definition}

  If $S\subseteq X$ is a relative section, the restriction homomorphism $k[X]^G\to k[S]^N$ is well-defined and injective: if $f\in k[X]^{G}$ restricts to zero on $S$, then it must be zero on all of $X$ by property (i) of Definition \ref{def:relativeSection}.
  It follows that restriction gives an injective field homomorphism $k(X)^G \to k(S)^N$.
  The following consequence of relative sections is that $k(X)^{G}\to k(S)^{N}$ is an isomorphism.

  \begin{thm}[{\cite[Thm 3.1]{CT-S}}]
    \label{thm:relativeSectionsAreBirational}
    Let $S\subseteq X$ be a relative section for the $G$-action.
    The restriction map $k(X)^G \rightarrow k(S)^{N(S)}$ is an isomorphism. 
  \end{thm}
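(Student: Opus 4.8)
The plan is to realize the restriction map as the map on $G$-invariant rational functions induced by a $G$-equivariant birational model of $X$ built out of $S$. Since injectivity of $k(X)^G \to k(S)^{N}$ has already been established, the entire content of the statement is surjectivity, and I would obtain it by exhibiting an inverse arising from a birational equivalence rather than by a bare transcendence-degree count.

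First I would introduce the action morphism $a\colon G \times S \to X$, $(g,s) \mapsto g\cdot s$, together with two commuting group actions on the source: $G$ acting by left translation on the first factor, $h\cdot(g,s) = (hg,s)$, and $N = N(S)$ acting by $n\cdot(g,s) = (gn^{-1}, n\cdot s)$ (here $NS \subseteq S$ is used, so that $n\cdot s \in S$). One checks directly that $a$ is $G$-equivariant and $N$-invariant. Using Rosenlicht's theorem I would form the geometric quotient of a dense $N$-invariant open subset of $G \times S$ by $N$, and denote the resulting rational quotient by $G\times^N S$. Because the two actions commute, $G$ still acts on $G\times^N S$, and $a$ descends to a $G$-equivariant dominant rational map $\bar a\colon G\times^N S \dashrightarrow X$.

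The heart of the argument is to show that $\bar a$ is birational. Dominance is precisely property (i), namely $\overline{GS} = X$. For generic injectivity I would pass to geometric points: if $\bar a(g_1,s_1) = \bar a(g_2,s_2)$ with $s_1, s_2$ lying in $S_0 = X_0 \cap S$, then $g := g_2^{-1}g_1$ satisfies $g\cdot s_1 = s_2 \in S_0$, so $gS_0(\bar k) \cap S_0(\bar k) \neq \emptyset$, and property (ii) forces $g \in N(\bar k)$; but then $(g_1,s_1)$ and $(g_2,s_2)$ already represent the same point of $G\times^N S$. Thus $\bar a$ is generically injective on $\bar k$-points, hence generically finite, and since $k$ has characteristic zero the generic fiber is reduced and separable over $k(X)$, so the induced extension $k(X) \hookrightarrow k(G\times^N S)$ has degree one and $\bar a$ is birational.

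Finally I would compute $G$-invariants through the birational identification $\bar a^{*}\colon k(X) \xrightarrow{\sim} k(G\times^N S)$, taking invariants in two stages. Rational functions on $G \times S$ invariant under left translation by $G$ are exactly those independent of the $G$-coordinate, giving $k(S)$; and one verifies that the residual $N$-action on this copy of $k(S)$ is the natural $N$-action on $S$, so passing to $N$-invariants yields $k(S)^{N}$. Hence $\bar a^{*}$ restricts to an isomorphism $k(X)^G \xrightarrow{\sim} k(S)^{N}$, and evaluating $\bar a$ along the section $s \mapsto (e,s)$ identifies this isomorphism with the restriction map, as required. I expect the main obstacle to be making the quotient $G\times^N S$ and the birationality of $\bar a$ rigorous over a field that is not algebraically closed: specifically, guaranteeing (via Rosenlicht's theorem and the characteristic-zero hypothesis) that the geometric-point injectivity supplied by (ii) genuinely upgrades to a degree-one extension of function fields over $k$, rather than merely to a purely inseparable or geometrically disconnected generic fiber.
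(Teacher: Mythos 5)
Your proposal cannot be matched line-by-line against the paper, because the paper does not actually prove Theorem \ref{thm:relativeSectionsAreBirational}: it verifies injectivity of the restriction map inline (from property (i) of Definition \ref{def:relativeSection}) and cites the full isomorphism to \cite[Thm.~3.1]{CT-S}. What you have written is a correct, self-contained reconstruction of the classical argument underlying that citation (cf.\ \cite[Section 2]{PV}): your twisted product $G\times^N S$ is the structured version of the elementary extension argument in which one defines $\tilde f(g\cdot s):=f(s)$ on $GS_0$ for $f\in k(S)^{N}$, with property (ii) guaranteeing well-definedness. Your dictionary is the right one---property (i) gives dominance of $\bar a$, property (ii) gives injectivity of $\bar a$ on the dense open $G\times^N S_0$, and characteristic zero upgrades injectivity on geometric points to a degree-one extension of function fields---and the two-stage invariant computation is legitimate because the $G$- and $N$-actions commute, so invariants may be taken in either order: $k(G\times^N S)^{G}=\bigl(k(G\times S)^{G}\bigr)^{N}=k(S)^{N}$. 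Two points would need to be made explicit in a full write-up: first, $G\times S$ must be geometrically integral for its function field to make sense, which requires $G$ connected (harmless here, since the paper's groups are $\mathrm{SO}_3(\mathbb{R})$ and $\mathrm{SO}_3(\mathbb{R})\times\mathrm{SO}_3(\mathbb{R})$, and connectedness is part of the standing conventions in \cite{CT-S}); second, the $k$-rationality worry you raise at the end resolves itself exactly as you anticipate, because $a$, the $N$-action, and the section $s\mapsto(e,s)$ are all defined over $k$, so the degree-one conclusion for the extension $k(X)\hookrightarrow k(G\times^N S)$ holds over $k$ and not merely after base change to $\bar k$. What your route buys over the paper's bare citation is an explicit mechanism exhibiting how each axiom of Definition \ref{def:relativeSection} is consumed; what the direct extension argument buys over yours is brevity, since it avoids Rosenlicht's theorem and the formation of $G\times^N S$ altogether.
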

  
  Furthermore, if the relative section $S\subseteq X$ satisfies $\dim(S)=\dim(X)-\dim(G)$, then the normalizer $N(S)$ is finite.
  In this case, the calculation $k(S)^{N(S)}$ is easier.

  Under special circumstances, it is possible to calculate the ring of polynomial invariants $k[X]^G$ from a relative section---not just the rational invariants!
  In this case we say the section is of `Chevalley' type, since it generalizes the Chevalley Restriction Theorem in the theory of semi-simple complex Lie groups (see \cite[3.8]{PV}).  

  \begin{definition}
    \label{def:ChevalleySec}
    A relative section $S\subseteq X$ is called a \emph{Chevalley section} if the restriction homomorphism $k[X]^G\to k[S]^{N(S)}$ is an isomorphism. 
  \end{definition}

  In practice, if $N(S)$ does not act faithfully on $S$ then it is still difficult to calculate the polynomial and rational invariants.
  To avoid this scenario, we can always pass to the Weyl group which always acts faithfully.
  
  \begin{definition}
    \label{remark:WeylGroup}
    Suppose $S\subseteq X$ is a relative section for the $G$-action.
    
    We define the \emph{centralizer} of $S$ by
    \[
      Z(S)=\{g\in G: gs=s \ \forall s\in S\}.
    \]
    
    We also define the \emph{Weyl group} of $S$ by $W(S)=N(S)/Z(S)$.
  \end{definition}
  
  By construction $W(S)$ acts faithfully on $S$ and there are isomorphisms $k(S)^{N(S)}\cong k(S)^{W(S)}$ and $k[S]^{N(S)}\cong k[S]^{W(S)}$.

\subsection{Invariants and localization}

As noted earlier, the field of rational invariants $k(X)^G$ need not coincide with the fraction field of the ring $k[X]^G$. It is possible however to impose conditions on the group $G$ to ensure that this is the case (\cite[Lemma 2.2]{CT-S}). We end this section by generalizing this result to an arbitrary localization by a $G$-invariant set.

  \begin{lem}
  \label{lemma:localizationAndInvariants}
    Let $A$ be an integral $k$-algebra of finite type equipped with an action of a linear algebraic $k$-group $G$.
    Let $S\subseteq A$ be a multiplicatively closed subset not containing $0$ and set $S^{G}=S\cap A^{G}$.
    If $A$ is a UFD, and $G$ is connected with no non-trivial characters, then $(S^{G})^{-1}(A^{G})=(S^{-1}A)^{G}$.

  \end{lem}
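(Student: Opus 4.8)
The plan is to prove the two inclusions separately, with all the content in the reverse one. The inclusion $(S^{G})^{-1}(A^{G})\subseteq (S^{-1}A)^{G}$ is formal: since $A$ is a domain and $0\notin S$, the localization map $(S^{G})^{-1}(A^{G})\hookrightarrow S^{-1}A$ is injective, and any $b/t$ with $b\in A^{G}$ and $t\in S^{G}=S\cap A^{G}$ is visibly $G$-invariant. Thus everything reduces to showing that an arbitrary $x\in (S^{-1}A)^{G}$ can be written with numerator in $A^{G}$ and denominator in $S^{G}$; equivalently, that the map above is surjective.

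First I would locate a canonical numerator and denominator for $x$. Because $A$ is a UFD, the ideal of denominators $\mathfrak d_{x}=\{a\in A: ax\in A\}$ is principal, say $\mathfrak d_{x}=(q)$, and writing $x=p/q$ in lowest terms we have $\gcd(p,q)=1$. Since $x$ is $G$-invariant, $\mathfrak d_x$ is a $G$-stable ideal: if $ax\in A$ then $(g\cdot a)\,x=g\cdot(ax)\in A$, so $g\cdot a\in\mathfrak d_x$. Hence $(q)$ is a $G$-stable principal ideal, and here the two group hypotheses enter through the key \emph{engine}: in a UFD on which a connected group $G$ with no nontrivial characters acts, every $G$-stable principal ideal admits a $G$-invariant generator. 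Indeed $g\cdot q=\lambda_g\,q$ for units $\lambda_g\in A^{\times}$, so $q$ is a semi-invariant; the connectedness of $G$ forces its weight to be an algebraic character $G\to\mathbb{G}_m$, which is trivial by hypothesis, so $q$ is $G$-invariant up to a nonzero scalar. (This is the same mechanism that yields the fraction-field identity $\mathrm{Frac}(A)^{G}=\mathrm{Frac}(A^{G})$ of \cite[Lemma 2.2]{CT-S}.) Taking $q\in A^{G}$, we get $p=qx\in A^{G}$ as well, so $x=p/q$ with $p,q\in A^{G}$.

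The remaining step---arranging the denominator to lie in $S$, and not merely in $A^{G}$---is, I expect, the main obstacle, and it is precisely what is new beyond the fraction-field case. Since $x\in S^{-1}A$, the $G$-stable ideal $(q)$ meets $S$: there is $s\in S$ with $q\mid s$. Here I would exploit the $G$-invariance of $S$: every translate $g\cdot s$ again lies in $S$, and since $(q)$ is $G$-stable each translate remains divisible by $q$. The goal is to manufacture from these a single $G$-invariant element of $S$ that is still divisible by $q$. The natural candidate is the greatest common divisor $d$ of the orbit $G\cdot s$: it is fixed up to units by $G$, hence a semi-invariant, hence $G$-invariant by the engine above; it is divisible by $q$ and divides $s$. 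What must then be checked---and this is the delicate point---is that $d$ (or a suitable invariant multiple) actually represents a denominator drawn from $S^{G}$, i.e.\ that divisibility by the invariant $q$ can be transferred to an element of $S\cap A^{G}$. This is where the hypothesis that $S$ is $G$-invariant (and, implicitly, saturated, as it is in the fraction-field case $S=A\setminus\{0\}$) is indispensable. Granting this, $x=p/q$ with $p\in A^{G}$ and $q\in S^{G}$, so $x\in (S^{G})^{-1}(A^{G})$, which establishes the reverse inclusion and completes the proof.
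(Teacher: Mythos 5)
Your proposal takes a genuinely different route from the paper's proof, and it contains a real gap at its decisive step. On the route: your ``engine'' (lowest terms in a UFD, $G$-stability of the denominator ideal $\mathfrak{d}_x$, semi-invariance of its generator, triviality of characters of a connected group) is precisely the argument of \cite[Lemma 2.2]{CT-S}, which the paper does \emph{not} reproduce --- it invokes that argument wholesale for the case $k=\bar{k}$ and devotes its entire written proof to the step you never address: descent to a non-algebraically closed field. (The paper forms the cokernel $C$ of the inclusion $(S^{G})^{-1}A^{G}\subseteq (S^{-1}A)^{G}$, base changes to $\bar{k}$, uses that localization and taking invariants commute with flat base change, and concludes $C\otimes_k\bar{k}=0$, hence $C=0$.) Your manipulations $g\cdot q=\lambda_g q$, with group elements acting on $A$, only make literal sense over $\bar{k}$; since the paper's setting is $k=\mathbb{R}$, that omission is not cosmetic.

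The more serious problem is the step you end with ``Granting this.'' Having written $x=p/q$ with $p,q\in A^{G}$, you must produce a denominator for $x$ lying in $S^{G}=S\cap A^{G}$, and your candidate $d=\gcd(G\cdot s)$ --- while indeed invariant, divisible by $q$, and a divisor of $s$ --- need not lie in $S$, and this failure can be fatal rather than technical. Take $A=k[x,y]$ and $G=\mathbb{G}_a$ acting by $t\cdot(x,y)=(x,y+tx)$ (connected, characterless, $A^{G}=k[x]$), and let $S$ be the multiplicative closure of the $G$-orbit $\{x(y+tx)\,:\,t\in k\}$ of $xy$. Then $S$ is stable under $G(k)$ and $0\notin S$; the invariant fraction $1/x=y/(xy)$ lies in $S^{-1}A$ (reading $(S^{-1}A)^{G}$ as $S^{-1}A\cap\mathrm{Frac}(A)^{G}$, which is the reading your own proof uses); your $d=\gcd(G\cdot xy)=x$ is invariant and divides $xy$ exactly as you predict; yet $x\notin S$, and in fact $S\cap A^{G}=\{1\}$, so $(S^{G})^{-1}A^{G}=k[x]$ misses $1/x$ entirely. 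So the divisibility-transfer step cannot be granted in the generality you (and, arguably, the statement itself) work in: it genuinely requires hypotheses such as saturation of $S$ or $S$ consisting of invariant elements. In the situation where the paper actually applies the lemma ($S=\{\Delta^{n}\}$ with $\Delta\in A^{G}$, cf.\ Remark \ref{rmk:localizationAndInvariants}), the step you are missing is a one-liner: $q\mid \Delta^{n}$, and $p\Delta^{n}/q\in A\cap\mathrm{Frac}(A)^{G}=A^{G}$, so $x=(p\Delta^{n}/q)/\Delta^{n}\in (S^{G})^{-1}(A^{G})$. Your elementwise route exposes this subtlety but does not resolve it, whereas the paper's citation-plus-base-change argument never confronts it.
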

  \begin{proof}
    When $k$ is algebraically closed, the same argument as \cite[Lemma 2.2]{CT-S} gives the desired result.
    
    Thus, it suffices to show the non-algebraically closed case reduces to the algebraically closed case.
    With this in mind, consider the short exact sequence
    \[
      0\to (S^{G})^{-1}A^{G}\to (S^{-1}A)^{G}\to C\to 0
    \]
    where $C$ is just the cokernel of the natural inclusion.
    Base changing to the algebraic closure gives the following short exact sequence of $\overline{k}$-modules
    \[
      0\to ((S^{G})^{-1}A^{G})\otimes_{k}\overline{k}\to (S^{-1}A)^{G}\otimes_{k}\overline{k}\to C\otimes_{k}\overline{k}\to 0.
    \]
    On the other hand, since both localization and taking $G$-invariants  commute with base-change to $\bar{k}$, we must have $C\otimes \bar{k} = 0$, by \cite[Lemma 2.2]{CT-S}. This implies that $C = 0$. 
  \end{proof}
  
  As noted in \cite[Remarks after Lemma 2.2]{CT-S}, the assumption that $G$ have trivial character group is necessary.

    \begin{remark}
    \label{remark:fractionFields}
    In the case of a finite group $G$ the proof of Lemma \ref{lemma:localizationAndInvariants} is elementary and the assumption on $A$ being a UFD can be dropped. 
  \end{remark}
  
\section{Preliminaries on Mixed States}
  In this section we recall definitions related to the invariant theory of mixed states of $n$-qubits. First, as noted in the introduction, the space of mixed states is \emph{not} an algebraic variety.
  For this reason, we instead consider the \emph{Liouville space} which is the smallest algebraic $\mathbb{R}$-variety containing all mixed states.
  We also describe the action of unitary matrices and note there is a corresponding action on the Liouville space.
  Second, we describe the \emph{Bloch matrix} representation of the Liouville space.
  Last, we describe locally maximally mixed states and symmetrically mixed states---which each correspond to subvarieties of the Liouville space.

\subsection{Mixed States and Local Unitary Invariants}
  We first briefly recall the mathematical formalism of pure and mixed quantum states as they appear in the context of quantum information science (e.g., \cite{NielsenChuang}).
  As is customary in the subject, we adopt bra-ket notation throughout.
  In this notation, $(V, \braket{\cdot}{\cdot})$ denotes a complex (Hermitian) inner product space, $\ket{\psi}\in V$ a vector, and $\bra{\psi}$ will denote its conjugate dual.

  We say $\ket{\psi}$ is a \emph{pure state} if $\ket{\psi}$ is a unit vector in $(V,\braket{\cdot}{\cdot})$.
  When $V = \bigotimes_{k=1}^n \mathbb{C}^2$ we say a pure state $\ket{\psi}\in V$ is an $n$-qubit, or $n$ qubits.
  More generally, an \emph{ensemble} of pure states is a finite set of pairs $\{(\ket{\psi_i}, p_i)\}$ consisting of pure states $\ket{\psi_i}$ and positive real numbers $0\leq p_i \leq 1$ satisfying $\sum p_i = 1$.
  The real number $p_i$ is physically interpreted as the probability that the ensemble is in state $\ket{\psi_i}$.
  An ensemble can also be represented by the \emph{mixed state} 
  \[
    \rho=\sum p_i \ket{\psi_i}\bra{\psi_i}\in\End(V),
  \]
  which is a positive semi-definite Hermitian linear operator of trace one.
  In coordinates, this is  known as the \emph{density matrix} of the mixed state.
  In this formalism, pure states $\ket{\psi}$ are mixed states corresponding to ensembles supported on only one projector.
  Furthermore, the density matrix of a pure state in this formalism is the rank one projection matrix of the operator $\ket{\psi}\bra{\psi}$. 

  We would like to study the space of mixed states directly, but mixed states do not form an algebraic variety (see the discussion below).
  For this reason, we instead consider the Liouville space.

  \begin{definition}
    Let $\operatorname{Herm}(V)$ be the $\mathbb{R}$-vector space of all Hermitian (i.e., self-adjoint) operators on an inner-product space $V$.
    The \emph{Liouville space} of $V$ is the hyperplane 
    \[
      \mathscr{L}=\mathscr{L}(V)=\{\rho\in\operatorname{Herm}(V):\operatorname{Tr}(\rho)=1\}.
    \]
    
    When no confusion will arise, we refer to elements $\rho\in\mathscr{L}$ as mixed states.
  \end{definition} 
  
  We can view the space of mixed states as a semialgebraic subvariety of the real algebraic variety $ \scr L $~\cite{Gamel}.
  The set of mixed states itself is not an algebraic subvariety of $ \scr L $ because of the positive semi-definite condition on mixed states.
  Nevertheless, the polynomial invariants on $ \scr L $ and its $G$-invariant subvarieties are still of interest in quantum information science.
  Plus, the positivity condition can always be re-introduced later by enforcing bounds and inequalities on the invariants and their values (see Remark \ref{remark:inequalities} below).
  For these reasons, we work with $ \scr L $ instead of the space of mixed states.

  An important property of mixed states is that they are closed under partial trace, that is, the partial trace of a mixed state is another mixed state (in lower dimension).
  This is in contrast to the case of pure states, which are not preserved by taking partial traces.
  
  \begin{definition}
    \label{partialTrace}
    Suppose $V$ is a bi-partite system (i.e., $V=V_{1}\otimes V_{2}$).
    The \emph{partial trace over $V_2$} of a simple tensor is defined as 
  \[
    \operatorname{tr}_{2}\left(\ket{v_1}\bra{v_1'}\otimes \ket{v_2}\bra{v_2'}\right) = \ket{v_1}\bra{v_1'} \cdot  \bra{v_2}\ket{v_2'} \in \End(V_1),
  \]
  which is a mixed state of $V_1$.
  By linearity this definition can be extended to all pure and mixed states.
  \end{definition}
  
   The partial trace over $V_1$ is defined similarly, and it is a mixed state of $V_2$. 
  Let  $\U(V)$ be the unitary group
  of $(V,\braket{\cdot}{\cdot})$   that is, the group of isomorphisms of $V$ that preserver the Hermitian inner product $\braket{\cdot}{\cdot}$.
  Conjugation by $U\in \U(V)$ preserves the Liouville space $ \scr L $ and the set of mixed states inside $ \scr L $.
  Physically, the conjugation action $\rho\mapsto U\rho U^\ast$ represents quantum evolution of the mixed state $\rho$ by $U$ (here $U^*$ denotes the conjugate transpose $\overline{U}^T$).

  \begin{definition}
    For a bi-partite system $V = V_1\otimes V_2$, the subgroup $ \U(V_1)\times \U(V_2)\subseteq \U(V)$ is called the group of {\em local unitary operators}. 
  \end{definition}
  
  In this case, $\U(V_1)\times \U(V_2)$ acts on $V$ by $(U_1, U_2)v=(U_1\otimes  \U_2) v$ and similarly it acts on mixed states $\rho$ by conjugation, i.e.,
  $(U_1,U_2)\cdot \rho := (U_1\otimes  U_2) \rho (U^\ast_1\otimes  U^\ast_2)$.

  In this paper we investigate polynomial and rational invariants of the action via local unitary invariants.
  In quantum physics, these invariants can be thought of as ``measures of entanglement.''\footnote{For a review of the vast literature on the subject, see \cite{Horodecki}.}
 Denote by $\mathbb{R}[ \scr L ]$ the ring of real-valued polynomial functions on $ \scr L $, and by $\mathbb{R}( \scr L )$ its field of fractions, consisting of rational real-valued functions. 

  \begin{definition}
    Suppose $V=V_1\otimes V_2$, $G = \U(V_1)\times \U(V_2)$ and let $ \scr L = \scr L (V)$ be the associated Liouville space.
    We say $f\in\mathbb{R}[ \scr L ]$ is a \emph{local unitary invariant} if $f(gx)=f$ for all $g\in G$ and all $x\in \scr L $.
    We denote the $\mathbb{R}$-algebra of local unitary invariants by $\mathbb{R}[ \scr L ]^G$.
    
    Similarly, we say a rational function $f\in\mathbb{R}( \scr L )$ is a {\it rational local unitary invariant} if $f(gx)=f(x)$ for all $g\in G$ and $x\in \scr L $.
    We write the field of rational local unitary invariants by $\mathbb{R}( \scr L )^G$.
  \end{definition}

  More generally, if $X\subseteq  \scr L $ is a $G$-invariant irreducible subvariety, we can define the ring $\mathbb{R}[X]^G$ of local unitary invariants on $X$ 
  as well as the field $\mathbb{R}(X)^G$ of rational local unitary invariants.
  For the rest of the  article, we study local unitary invariants of mixed states in the case when $ V = V_1\otimes V_2$ is a two-level system consisting of two qubits (i.e., $\dim V_1 = \dim V_2 = 2$), for several different subvarieties $X\subseteq  \scr L  =  \scr L (V)$ that are of interest in quantum information science, including the case $X =  \scr L $.

\subsection{The Bloch matrix representation}
\label{section:Bloch_matrix}
  In order to apply the methods of Section \ref{sec:sections}, we need to recall the \emph{Bloch matrix} representation of a mixed state of two qubits (see e.g., \cite{Gamel}).
Suppose first that $\dim_{\Bbb C} V = 2$ and $V$ is equipped with an orthonormal basis; then we have an identification $V\cong \Bbb C^2$.
  Let 
  \begin{equation}
    \label{eqn:PauliMatrices}
    \sigma_0 = \twomat 1001 , \quad   \sigma_1 = \twomat 0110, \quad \sigma_2 = \twomat 0{-i}i0, \quad \sigma_3 = \twomat {1}00{-1}
  \end{equation}
  be the four Pauli matrices.
  If $\rho\in\End(V)$ is the density matrix representing a mixed state of one qubit, the quantity $\operatorname{tr}(\rho \sigma_i)$ represents the expected value of measuring spin along the axis corresponding to $\sigma_i$.
  As is well-known, the map
  \begin{equation}
    \label{eq:bloch_sphere}
    B: \rho \longmapsto \left( \tr(\rho \sigma_1), \tr(\rho \sigma_2), \tr(\rho \sigma_3) \right)
  \end{equation}
  is a bijection between the space of mixed states of one qubit and the unit ball in $\mathbb{R}^3$ centered at the origin (which is called the \emph{Bloch ball}).
  In this representation, the boundary of the ball consists of the pure states, so the boundary is often called the {\em Bloch sphere} representation of a pure state of one qubit.
  On the other hand, the origin of the ball corresponds to the \emph{maximally mixed} state $\rho = \frac{1}{2}I$.
  The key property of the Bloch ball representation is that unitary evolution on mixed states is equivariant under $B$ with respect to 3-dimensional rotations of the ball.
  In other words, for every $U\in \U(V)$ and every mixed state $\rho \in \End(V)$, $B(U(\rho))=r_{U}(B(\rho))$ where $r_{U}\in\operatorname{SO}_{3}(\mathbb{R}$) is the image of the standard homomorphism
   $r\: \U_2(\Bbb C) \to\operatorname{SO}_{3}(\mathbb{R})$.
  In this way, unitary evolution for a mixed state of one qubit can be `visualized' as a rotation of the corresponding point of the Bloch ball. 

  For mixed states of two qubits there is a convenient generalization of the Bloch ball that is well-suited for the study of local unitary invariants.
  To describe it, we first generalize the map $B$ in terms of \emph{correlation functions}. 

  \begin{definition}
    Let $V = V_1\otimes V_2$ be a bi-partite system of two qubits (i.e., $\dim_{\Bbb C} V_1 = \dim_{\Bbb C} V_2 = 2$). We equip each
    $V_i$ with an orthonormal basis.
    For any pair of indices $(i,j)$ with $i,j \in \{0,1,2,3\}$, the \emph{correlation function} $C_{ij}:  \scr L  \rightarrow \mathbb{R}$ on Liouville space is given by the formula
    \[
      C_{ij}(\rho)=\operatorname{tr}(\rho\cdot \sigma_{i}\otimes\sigma_{j}).
    \]
    When $\rho$ is clear from context, we will drop it from the notation and just write $C_{ij}$.
    
    When either $i=0$ or $j=0$ but not both we say $C_{ij}$ is a \emph{1-point} correlation function, and if $i \ne 0 \ne j$,
     we say that $C_{ij}$ is a \emph{2-point} correlation function.
  A 2-point correlation function $C_{ij}$ satisfying $i\neq j$ is called a \emph{mixed} correlation function. 
  \end{definition}

  From the axioms of quantum mechanics, the value of each correlation function represents the expected value of measuring spin with respect to different choices of axis for each qubit.
  As in \eqref{eq:bloch_sphere}, for each mixed state $\rho\in \scr L $ we can then collect all the correlation functions in  a $4\times 4$ matrix $B(\rho)=(C_{ij})$
   which we call the \emph{Bloch matrix representation} of the $\rho$. 
  
  It will be convenient to repackage this expression.
  First, note $C_{00} = \operatorname{tr}(\rho) = 1$.
  Second, we set $u=(C_{i0})_{1\leq i\leq 3}$, $v =(C_{0j})_{1\leq j\leq 3}$ equal to $1$-point correlation functions and $C=(C_{ij})_{1\leq i,j\leq 3}$ 
  the $3\times 3$ matrix of $2$-point correlation functions.
  
  With this notation, $u$ (resp. $v$) is a mixed state of one qubit corresponding to taking the partial trace of $\rho$ with respect to $V_2$ (resp. $V_1$).
  In other words, $u$ and $v$ lie on two separate Bloch balls.
  Describing the space containing $C$ is more complicated, but it can be shown that it is nevertheless a semi-algebraic variety~\cite{Gamel}.
  Using $u$, $v$, and $C$ we can rewrite the Bloch matrix of $\rho$ in the convenient form
  \[
    B(\rho) = \left( \begin{tabular}{c|c}
 $1$ & $v^t$ \\
 \hline
 $u$ & $C$ \\ 
 \end{tabular} \right)\in\mathbb{R}^{15}.
  \] 

  There is an natural action of $\operatorname{SO}_{3}(\mathbb{R})\times\operatorname{SO}_{3}(\mathbb{R})$ on the Bloch matrix $B(\rho)$.
  Namely, if $(R_1,R_2)$ is a pair of rotations in $\operatorname{SO}_{3}(\mathbb{R})\times \operatorname{SO}_{3}(\mathbb{R})$, we define the action
  \begin{equation}
    \label{eqn:generalAction}
    (R_1, R_2) B(\rho) = \left( \begin{tabular}{c|c}
 1 & $ (R_2v)^t$ \\
 \hline
 $R_1u$ & $R_1CR_2^t$ \\ 
 \end{tabular} \right) .
  \end{equation}
  This action is equivariant with respect to unitary evolution of $\rho$ by local unitary operators.
  In other words, if $(U_1, U_2) \in \U(V_1)\times \U(V_2)$, then
  \[
    B\left((U_1\otimes  \U_2) \rho (U_1\otimes  \U_2)^{-1}\right) = (r(U_1),r( \U_2))B(\rho) ,
  \]
  where again here $r$ is the standard homomorphism $r:\U_2(\Bbb C)\rightarrow \operatorname{SO}_{3}(\mathbb{R})$, as in the case of one mixed state.

  \begin{notation}
    Suppose $V=V_{1}\otimes V_{2}$ and $X\subseteq  \scr L (V)$.
    We write the image of $X$ under the morphism $B: \scr L \to\mathbb{R}^{15}$ by $BX$.
\end{notation}
 
  In particular, $B \scr L  = \mathbb{R}^{15}$, since the 1-point and 2-point correlation functions form an orthonormal basis for the dual of the space of $2\times 2$ and $4\times 4$ Hermitian matrices, respectively.

\subsection{Subvarieties of Mixed States}
  In this subsection, we define the subvarieties $ \scr L _{M}\subseteq \scr L $ of \emph{locally maximally mixed states} and $ \scr L _{S}\subseteq \scr L $ of \emph{symmetrically mixed states}.
  
  We first define locally maximally mixed states.
  Recall from Section \ref{section:Bloch_matrix} that the maximally mixed state of one qubit is the qubit with density matrix $\rho=\frac{1}{2}I$.
  Also recall there are two different maps from a mixed state of two qubits $\rho\in\End(V_1\otimes V_2)$ to a mixed state of one qubit given by the two partial traces 
  $\operatorname{tr}_1$ and $\operatorname{tr}_2$.

  \begin{definition}
    \label{definition:locallyMaximallyMixed}
    The set of \emph{locally maximally mixed states} (abbreviated \emph{LMM states}) is the subset of mixed states $\rho\in\End(V_1\otimes V_2)$ such that both $\operatorname{tr}_1(\rho)$ and $\operatorname{tr}_2(\rho)$ are maximally mixed (i.e., equal to $\frac{1}{2}I$) where $\operatorname{tr}_{i}$ is the partial trace over $V_{i}$ (see Definition \ref{partialTrace}).
  \end{definition}
  
  \begin{remark}
    A well-known class of locally maximally mixed states is given by the \emph{Bell states}.
    These are pure states of two qubits given by expressions such as
    \[
      \ket{\psi}=\frac{\ket{0}\otimes\ket{0} + \ket{1}\otimes\ket{1}}{\sqrt{2}},
    \] 
    where, as is standard in quantum information science, $\{\ket{0}, \ket{1}\} \subseteq V = \mathbb{C}^2$ indicates the orthonormal basis consisting of eigenvectors of the Pauli matrix $\sigma_3$.
    Indeed, it can be easily checked that both partial traces of $\ket{\psi}$ are equal to $\tfrac{1}{2}I$.
    It can also be checked that the Bell states are the only examples of pure states that are maximally mixed~\cite{Gamel}. 
  \end{remark}
    
  As with mixed states, locally maximally mixed states do not form a subvariety of the Liouville space.
  Instead, we consider ``maximally mixed" elements of $ \scr L $ which do not need to satisfy the positive semi-definite property of a mixed state.
    
  \begin{definition}
    \label{definition:locallyMaximallyMixedLiouville}
    We define $ \scr L _{M}\subseteq \scr L $ by
    \[
       \scr L _{M}=\left\{\rho\in \scr L |\operatorname{tr}_{1}(\rho) =\tfrac{1}{2}I = \operatorname{tr}_{2}(\rho) \right\}.
    \]
If no confusion arises, we typically refer to elements of $\rho\in \scr L _{M}$ as locally maximally mixed states.
  \end{definition}
  
  By definition, $ \scr L _{M}$ is an intersection of hyperplanes in Liouville space, so $ \scr L _{M}$ is a real subvariety of $ \scr L $.
  In fact, abstractly $ \scr L _{M}=\mathbb{A}^{9}_{\mathbb{R}}$ so $ \scr L _{M}$ is geometrically irreducible and factorial.

  We now define symmetrically mixed states. Suppose that $V_1 = V_2$. Then
$V = V_1 \otimes V_1$ is equipped with an involution  
 $\tau$  which swaps the tensor factors. The involution of $V$ induces an involution on its Liouville space by operator conjugation.
 
   \begin{definition} Let $V = V_1 = V_2$ be a two-dimensional Hilbert space.
    The set of \emph{symmetrically mixed states}  is the subset of mixed states whose 
    associated density matrix $\rho\in\scr L$ satisfies $\tau\rho=\rho\tau$.
  \end{definition}
  
  As in the case of mixed states and locally maximally mixed states, symmetrically mixed states do not form a subvariety of the Liouville space $ \scr L $.
  To rectify this, we introduce the {\it symmetric Liouville space}:
  \begin{definition}
    \label{definition:symmetricallyMixedStates}
    We define $ \scr L _{S}\subseteq \scr L $ by 
    \[
       \scr L _{S}=\{\rho\in \scr L \mid \tau\rho=\rho\tau\}.
    \]
    i.e., $\scr L_S = \scr L^{\Bbb Z_2}$ is the fixed point set of the canonical involution.
  \end{definition}

 By a straightforward calculation that we omit, a mixed state $\rho$ is symmetrically mixed if and only if
its Bloch matrix representation has the form
  \[
    B(\rho) = \left( \begin{tabular}{c|c}
    $1$ & $v^t$ \\
    \hline
    $v$ & $A$ \\ 
    \end{tabular} \right), 
  \]
 in which $A = A^T$ is a real symmetric $3\times 3$ matrix.
  Therefore, $B \scr L _{S}$ is a 9-dimensional subspace of $B \scr L $, isomorphic to the direct sum of $\mathbb{R}^3$ and the vector space of $3\times 3$ symmetric matrices---which is $6$-dimensional. 

\section{Locally maximally mixed states}
\label{section:LMMstates}
The goal of this section is to determine the full ring of polynomial invariants for the subvariety  $ \scr L_M \subseteq \scr L $ of 
locally maximally mixed states.  Recall 
that $ \scr L _{M}$ is a geometrically integral, factorial real subvariety of $ \scr L$ (cf.~Definition \ref{definition:locallyMaximallyMixedLiouville}).

  Instead of working with $ \scr L _{M}$ directly, we consider the associated Bloch matrix representation.
  Since $\operatorname{tr}_{1}(\rho)$, $\operatorname{tr}_{2}(\rho)=\frac{1}{2}I$ for all $\rho\in \scr L _{M}$, and $\frac{1}{2}I$ is the origin of the Bloch ball,
  we have that $u(\rho),v(\rho)=0$ for all $\rho\in \scr L _{M}$.
  In other words,
  \[
    B(\rho)=\left( \begin{tabular}{c|c}
 $1$ & $ 0$ \\
 \hline
 $0$ & $C$ \\ 
 \end{tabular} \right)\in\mathbb{R}^{9}.
  \]
  In particular, the linear map $B(\rho)\mapsto C$ is an isomorphism of $B \scr L _{M}$ with the 
  real vector space $M_{3}(\mathbb{R})$ of
   $3\times 3$ matrices.
  
  Furthermore, the action of local unitary operators $(U_1, U_2) \in \U(V_1)\times \U(V_2)$ on $B \scr L _{M}$ is given by 
  \begin{equation}
    \label{eqn:LMM-rep}
    (R_1, R_2) B(\rho) = \left(
      \begin{tabular}{c|c}
        $1$ & $0$ \\ \hline
        $0$ & $R_1CR_2^t$ \\ 
      \end{tabular} \right)(\rho) ,
  \end{equation}
  where $R_1 = r(U_1),R_2 = r(U_2) \in \operatorname{SO}_{3}(\mathbb{R})$ are rotation matrices that are the images of the unitary operators $U_1,  U_2$ under the standard map 
  $r:\U(2)\to\operatorname{SO}_{3}(\mathbb{R})$.
  The action of each $R_i$ is clearly linear, and therefore we obtain a linear representation 
\[
\varphi_{M}: \operatorname{SO}_{3}(\mathbb{R})\times \operatorname{SO}_{3}(\mathbb{R}) \longrightarrow \GL(B\mathscr L_M) \cong \GL_9(\mathbb{R}). 
\]
We first find a relative section for this action. Consider the linear subspace of $B\mathscr L_{M}$ defined by
\[
S = \left\{ \left( \begin{tabular}{c|c}
 1 & 0 \\
 \hline
 0 & $C$ \\ 
 \end{tabular} \right) \in B\mathscr L_{M} : C  \text{ is diagonal  }  \right\} .
\]
 In particular, $S$ is an irreducible real algebraic variety.
  
  We next calculate the normalizer $N(S)$ of $S$.
  A pair $(R_1, R_2) \in \operatorname{SO}_{3}(\mathbb{R})\times \operatorname{SO}_{3}(\mathbb{R})$ belongs to $N(S)$ if and only if there exist diagonal matrices $C$, $D$ satisfying
  \[
    R_1 C R_2^t=D.
  \]
  By multiplying with the transpose, the rotation matrices $R_1, R_2$ must satisfy $R_1 CC^t R_1^t = DD^t$ and $R_2^t C^t C R_2 = D^tD$.
  Since $CC^t = C^tC = C^2$ and $DD^t = D^tD = D^2$ are again diagonal matrices, $R_1$ and $R_2$ must belong to the normalizer of the subgroup of diagonal matrices inside $\operatorname{SO}_{3}(\mathbb{R})$.
  This normalizer is equal to the subgroup of signed permutation matrices of determinant +1, a group of order $24$ that is abstractly isomorphic to the \emph{chiral octahedral group} $\operatorname{O}\subseteq\operatorname{SO}_{3}(\mathbb{R})$ the group of orientation-preserving symmetries of the octahedron.
  
  Elements of $\operatorname{O}$ are of the form $EP$, where $E$ is a diagonal matrix with entries in $\{\pm 1\}$ and $P$ is a permutation matrix.
  In this notation, write $R_i = E_iP_i$.
  The pair $(R_1, R_2) \in \operatorname{O}\times\operatorname{O}$ is an element of $N(S)$ (i.e., $R_1CR_2^t$ is diagonal for every diagonal matrix $C$) if and only if $P_1 = P_2$. 
  Therefore,
  \[
    N(S)=\{(E_1P, E_2P)\in\operatorname{O}\times\operatorname{O}\}. 
  \]
  
  Since the action of $N(S)$ on $S$ is not faithful, it is also necessary to compute the Weyl group of $S$ (see Remark \ref{remark:WeylGroup}).
  From the description of $N(S)$ we find
  \[
    Z(S) = \{(E_1P, E_2P) \in \mathrm{O}\times \mathrm{O}: E_1 = E_2 \}. 
  \]
  Therefore, we find $W(S)$ is abstractly isomorphic to $\operatorname{O}$.

  We now prove $S$ is a relative section (Definition \ref{def:relativeSection}). 
  \begin{prop}
    \label{prop:relativeSectionLMM}
  $S\subseteq B\mathscr L_M$ is a relative section for the action of $\operatorname{SO}_{3}(\mathbb{R})\times \operatorname{SO}_{3}(\mathbb{R})$. 
  \end{prop}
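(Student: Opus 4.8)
The plan is to verify directly the two conditions of Definition \ref{def:relativeSection} for the subvariety $S$ of diagonal matrices inside $X := B\mathscr{L}_M \cong M_3(\mathbb{R})$, on which $G = \operatorname{SO}_3(\mathbb{R}) \times \operatorname{SO}_3(\mathbb{R})$ acts by $(R_1, R_2) \cdot C = R_1 C R_2^t$. The geometric engine behind both conditions is the singular value decomposition, together with the explicit description of $N(S)$ as pairs of signed permutation matrices $(E_1 P, E_2 P)$ established above.

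For condition (i) I would invoke the real singular value decomposition: every $C \in M_3(\mathbb{R})$ factors as $C = U \Sigma V^t$ with $U, V$ orthogonal and $\Sigma$ diagonal. If $\det U = -1$ I absorb a sign by replacing $(U, \Sigma)$ with $(UJ, J\Sigma)$ for $J = \operatorname{diag}(-1,1,1)$, and symmetrically on the right for $V$, so that one may take $U, V \in \operatorname{SO}_3(\mathbb{R})$ at the cost of allowing negative diagonal entries in $\Sigma$. Since $S$ contains all diagonal matrices, this shows $G(\mathbb{R}) \cdot S(\mathbb{R}) = X(\mathbb{R})$. As $X \cong \mathbb{A}^9_{\mathbb{R}}$ has Zariski-dense real points, the image of the orbit morphism $G \times S \to X$ is dense, giving $\overline{GS} = X$.

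For condition (ii) I would take $X_0 = \{C \in X : CC^t$ has three distinct eigenvalues and $\det C \neq 0\}$. This set is $G$-invariant, because the eigenvalues of $CC^t$ and the determinant $\det C$ are unchanged under $C \mapsto R_1 C R_2^t$ (here $R_i^t R_i = I$ and $\det R_i = 1$), and it is open and dense since both defining conditions are generic. Its trace $S_0 = X_0 \cap S$ consists of the diagonal matrices $\operatorname{diag}(c_1, c_2, c_3)$ with $c_1^2, c_2^2, c_3^2$ distinct and all $c_i \neq 0$. Now suppose $g = (R_1, R_2)$ satisfies $gD = D'$ with $D, D' \in S_0(\mathbb{C})$, that is, $R_1 D R_2^t = D'$. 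Multiplying by transposes and using $R_1^t R_1 = I = R_2^t R_2$ yields $R_1 D^2 R_1^t = D'^2$ and $R_2^t D^2 R_2 = D'^2$. Since $D^2$ has the three distinct eigenvalues $c_i^2$ with one-dimensional eigenspaces spanned by the standard basis vectors, orthogonal conjugation by $R_1$ (resp.\ $R_2$) must send each $e_i$ to $\pm e_{\sigma(i)}$; hence $R_1$ and $R_2$ are signed permutation matrices, say $R_1 = E_1 P_1$ and $R_2 = E_2 P_2$. Finally, writing $R_1 D R_2^t = E_1 (P_1 D P_2^t) E_2^t$ and using that all $c_i \neq 0$, the diagonality of this product forces $P_1 = P_2$. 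By the description of $N(S)$ recalled above, $g \in N(S)$, which is exactly condition (ii).

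The two conditions then establish that $S$ is a relative section. I expect the main obstacle to be the last step of condition (ii): the distinctness of the squared singular values only forces $R_1, R_2$ into the signed-permutation normalizer of the diagonal torus, and it is the separate hypothesis $\det C \neq 0$ cut out in $S_0$ that is needed to promote this to the matching of permutation parts $P_1 = P_2$. One should also note that (ii) is tested on geometric (complex) points, so the orthogonality relations must be read in the bilinear rather than the Hermitian sense; fortunately the eigenspace argument is insensitive to this, as the eigenspaces of $D^2$ remain one-dimensional over $\mathbb{C}$.
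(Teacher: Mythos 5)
Your proof is correct and follows essentially the same route as the paper's: singular value decomposition with sign corrections for condition (i), and the distinct-eigenvalue locus of $CC^t$ together with the signed-permutation argument over $\mathbb{C}$ for condition (ii). The one substantive difference is at the last step: the paper concludes directly that $R_1,R_2$ being signed permutation matrices places the pair in $N(S)(\mathbb{C})$, silently skipping the verification that the two permutation parts agree, whereas you make this explicit by shrinking $X_0$ with the additional ($G$-invariant, dense open) condition $\det C \neq 0$ and using the nonvanishing diagonal entries to force $P_1 = P_2$. Your extra hypothesis is convenient but not strictly necessary: on the paper's larger $S_0$ at most one diagonal entry of $C$ can vanish (two zeros would give $C^2$ a repeated eigenvalue), and two permutations of $\{1,2,3\}$ that agree on two letters agree on the third, so $P_1 = P_2$ holds there as well; still, your version is the more careful one. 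Two minor points: your displayed identity $R_2^t D^2 R_2 = D'^2$ should read $R_2 D^2 R_2^t = D'^2$ (a transposition slip that does not affect the conclusion), and your closing remark about bilinear rather than Hermitian orthogonality over $\mathbb{C}$ is exactly right, since $R^t R = I$ forces the scalar $\lambda$ in $Re_i = \lambda e_{\sigma(i)}$ to satisfy $\lambda^2 = 1$.
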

  \begin{proof}
    For ease of notation, set $G = \operatorname{SO}_{3}(\mathbb{R})\times \operatorname{SO}_{3}(\mathbb{R})$.
    
    First, we claim the orbit 
    \[
      GS = \{ M \in B \scr L _{M}  : M = R_1CR_2^t \ \text{for some} \ C\in S \ \text{and} \ R_1, R_2 \in \operatorname{SO}_{3}(\mathbb{R})\} 
    \]
is all of $B \scr L _{M}$.
  To see this, by the singular-value decomposition of the matrix $M \in B \scr L _{M}$, there are orthogonal matrices $R_1, R_2 \in\operatorname{O}_{3}(\mathbb{R})$ such that
$M  = R_1\Sigma R_2^t$
where $\Sigma$ is a diagonal matrix with non-negative entries (the singular values of $M$). Choose diagonal matrices $D_1, D_2$ whose entries belong to $\{\pm 1 \}$ and such that $\det D_i = \det R_i \in \{ \pm 1 \}$. Then 
\[M  = (R_1D_1)(D_1^{-1}\Sigma D_2^{-1}) (R_2D_2)^t\]
so that $M$ is in the same $\operatorname{SO}_{3}(\mathbb{R})\times \operatorname{SO}_{3}(\mathbb{R})$-orbit of the matrix $(D_1^{-1}\Sigma D_2^{-1})$. The latter matrix is diagonal, and therefore it belongs to $S$. This shows that $GS = X$. Next, let 
\[
X_0 = \left\{ \left( \begin{tabular}{c|c}
 1 & 0 \\
 \hline
 0 & $C$ \\ 
 \end{tabular} \right)  : CC^t \text{ has distinct eigenvalues} \right\} 
\] 
This is a $G$-invariant dense open subset of $X$, being the complement of the zero locus of the discriminant $\Delta$ of the characteristic polynomial of $CC^t$ (note that this is a polynomial $G$-invariant). Let 
\[
S_0 = X_0\cap S = \left\{ \left( \begin{tabular}{c|c}
 1 & 0 \\
 \hline
 0 & $C$ \\ 
 \end{tabular} \right)  : C \text{ diagonal}, C^2 \text{ has distinct eigenvalues} \right\}.
\]  
Suppose $C_1, C_2$ represent two elements of $S_0(\mathbb{C})$ such that $R_1C_1R_2^t = C_2$, for $R_1, R_2 \in \mathrm{SO}_3(\mathbb{C})$. By multiplying with the transpose, we see that both $R_1C^2_1R_1^t$ and $R_2C^2_1R_2^t$ are equal to $C_2^2$, a diagonal matrix with distinct entries, so that $R_1, R_2$ must be signed permutation matrices belonging to $N(S)(\mathbb{C})$. Therefore $S$ is a relative section. 
\end{proof}

By inspecting the proof of Proposition \ref{prop:relativeSectionLMM}, we are able to construct a geometric rational quotient \ref{def:geometricRationalQuotient} for the action of $\operatorname{SO}_{3}(\mathbb{R})\times \operatorname{SO}_{3}(\mathbb{R})$ on LMM states:

\begin{thm}
\label{thm:geometricQuotientLMM}
Let $X = B\mathscr L_M$, $G = \operatorname{SO}_{3}(\mathbb{R})\times \operatorname{SO}_{3}(\mathbb{R})$ and let 
\[
X_0 = \left\{ \left( \begin{tabular}{c|c}
 1 & 0 \\
 \hline
 0 & $C$ \\ 
 \end{tabular} \right)  : CC^t \text{ has distinct eigenvalues} \right\} \subseteq X. 
\] 
Let $\Delta$ be the discriminant of the characteristic polynomial of $CC^t$. Then the algebraic quotient \[X_0/\!/ G = \Spec( \mathbb{R}[X_0]^G ) = \Spec( \mathbb{R}[X]^G[\Delta^{-1}])\]
is a geometric  quotient of $X_0$, and therefore a geometric rational quotient for the action of $G$ on $X$. 
\end{thm}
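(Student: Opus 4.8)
The plan is to verify the criterion of Remark \ref{rmk:closedAction}. Since $X_0 = X\setminus V(\Delta)$ is the localization of $X$ at the single invariant $\Delta$, it is a principal affine open, hence itself a geometrically integral affine $\mathbb{R}$-variety; and $G = \operatorname{SO}_3(\mathbb{R})\times\operatorname{SO}_3(\mathbb{R})$ is reductive. Thus it suffices to show that the $G$-action on $X_0$ is \emph{closed}, i.e.\ that every geometric orbit in $X_0(\bar k)$ is Zariski closed. Granting this, Remark \ref{rmk:closedAction} yields at once that $X_0/G = X_0/\!/G = \Spec(\mathbb{R}[X_0]^G)$ is a geometric quotient; and since $X_0$ is a dense $G$-invariant open of $X$ (the discriminant $\Delta$ of the characteristic polynomial of $CC^t$ is a nonzero $G$-invariant, as observed in the proof of Proposition \ref{prop:relativeSectionLMM}), this is a geometric rational quotient in the sense of Definition \ref{def:geometricRationalQuotient}.

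First I would reduce closedness to a dimension count. Over $\bar k$ I claim every geometric stabilizer $G_M$ with $M\in X_0(\bar k)$ is finite, so that every orbit has dimension $\dim G = 6$. By $G$-equivariance it is enough to check this on the diagonal representatives produced by the singular-value argument of Proposition \ref{prop:relativeSectionLMM}: if $C=\operatorname{diag}(c_1,c_2,c_3)$ with $C^2$ having distinct eigenvalues and $(R_1,R_2)\in\operatorname{SO}_3(\bar k)^2$ fixes $C$, then multiplying $R_1 C R_2^t = C$ by its transpose gives $R_1 C^2 R_1^t = C^2$, forcing $R_1$ to commute with $C^2$; since $C^2$ has distinct diagonal entries, $R_1$ (and likewise $R_2$) must be diagonal, hence a sign matrix in $\operatorname{SO}_3$, and only finitely many such pairs satisfy $R_1 C R_2^t = C$. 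With all orbits of the common dimension $6$, any orbit $O$ failing to be closed in $X_0$ would have nonempty frontier $\overline O\setminus O$, a $G$-invariant closed set consisting of orbits of dimension strictly less than $6$; as $X_0$ contains no such orbits, this is impossible, and every orbit is closed.

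It then remains to identify the coordinate ring, i.e.\ to prove $\mathbb{R}[X_0]^G=\mathbb{R}[X]^G[\Delta^{-1}]$. Because $X_0$ is obtained from $X$ by inverting the single $G$-invariant element $\Delta$, this is precisely the assertion that taking $G$-invariants commutes with this localization, which is Lemma \ref{lemma:localizationAndInvariants} applied to $A=\mathbb{R}[X]$ and the multiplicative set $S=\{\Delta^n\}$. The hypotheses hold: $A$ is a polynomial ring (as $X\cong\mathbb{A}^9_{\mathbb{R}}$) hence a UFD, the group $G=\operatorname{SO}_3(\mathbb{R})\times\operatorname{SO}_3(\mathbb{R})$ is connected and has no nontrivial characters since $\operatorname{SO}_3$ is semisimple, and $S=S^G$ because $\Delta$ is $G$-invariant.

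I expect the main obstacle to be the closedness step, and within it the verification that stabilizers are finite on \emph{all} of $X_0(\bar k)$ rather than merely on the real or diagonal locus. The subtlety is that $G(\bar k)=\operatorname{SO}_3(\mathbb{C})\times\operatorname{SO}_3(\mathbb{C})$ is noncompact, so the easy compactness argument available over $\mathbb{R}$ is unavailable; I would therefore rely on the orbit-frontier dimension estimate rather than on any metric properness. A secondary point to watch is the degenerate case in which some diagonal entry $c_i$ vanishes, which is permitted over $\bar k$ even when $C^2$ has distinct eigenvalues; this alters the stabilizer bookkeeping slightly but still leaves it finite, so the dimension argument survives.
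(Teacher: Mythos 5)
Your proof shares the paper's overall frame: reduce, via the affineness of $X_0$ and the closed-action criterion (Remark \ref{rmk:closedAction}, i.e.\ \cite[I.1.3]{GIT}, which is exactly what the paper invokes), to showing that the $G(\mathbb{C})$-orbits on $X_0(\mathbb{C})$ are closed, and then identify $\mathbb{R}[X_0]^G = \mathbb{R}[X]^G[\Delta^{-1}]$ by Lemma \ref{lemma:localizationAndInvariants}, exactly as the paper does in Remark \ref{rmk:localizationAndInvariants}. But your argument for closedness is genuinely different. The paper identifies each orbit explicitly as a closed set: the orbit of $C$ is shown to equal the locus of $M \in X_0(\mathbb{C})$ such that $MM^t$ is annihilated by the characteristic polynomial of $CC^t$, the key input being the uniqueness part (ii) of the complex singular value decomposition (Lemma \ref{lemma:SVDcomplex}). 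You instead use the soft, standard principle that the frontier $\overline{O}\setminus O$ of an orbit is a union of orbits of strictly smaller dimension, so that once all stabilizers in $X_0(\bar k)$ are finite, all orbits have dimension $6$ and must be closed. Your route avoids Lemma \ref{lemma:SVDcomplex}(ii) entirely; the paper's route does more work but buys more, since it exhibits the orbits as fibers of the map sending $M$ to the characteristic polynomial of $MM^t$, i.e.\ it shows concretely which invariants separate orbits on $X_0$.

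There is, however, one genuine gap: you justify the reduction of the stabilizer computation to diagonal representatives by ``the singular-value argument of Proposition \ref{prop:relativeSectionLMM}.'' That argument is the \emph{real} SVD and only produces diagonal representatives for real points; over $\bar k = \mathbb{C}$ it is unavailable, and a general complex matrix is \emph{not} diagonalizable under $\mathrm{O}_3(\mathbb{C})\times\mathrm{O}_3(\mathbb{C})$-equivalence---this failure is precisely why the paper proves Lemma \ref{lemma:SVDcomplex}, whose part (i) gives complex-orthogonal diagonalizability exactly under the hypothesis that $CC^t$ has distinct eigenvalues. So you must either invoke Lemma \ref{lemma:SVDcomplex}(i) to obtain diagonal representatives on all of $X_0(\mathbb{C})$, or prove finiteness of stabilizers directly without diagonalizing $M$: if $(R_1,R_2)$ fixes $M$, then $R_1$ centralizes $MM^t$ and $R_2$ centralizes $M^tM$; both of these are complex symmetric with distinct eigenvalues, hence orthogonally diagonalizable (eigenvectors for distinct eigenvalues are orthogonal for the bilinear form and non-isotropic by nondegeneracy), so each centralizer taken inside $\mathrm{SO}_3(\mathbb{C})$ is conjugate to the group of diagonal sign matrices of determinant one, which is finite. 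With that repair, your dimension-count proof of closedness is complete and the rest of your argument goes through.
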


\begin{remark}
\label{rmk:localizationAndInvariants}
Note that in identifying the quotient explicitly we have used the fact that 
$$
\Spec( \mathbb{R}[X]^G[\Delta^{-1}]) = \Spec( \mathbb{R}[X,\Delta^{-1}]^G),
$$
which follows from Lemma \ref{lemma:localizationAndInvariants} since $G$ is connected, has no non-trivial characters, and $X$ is factorial. 
\end{remark}

For the proof of this theorem and later in the article, we shall require a variant of the
singular value decomposition for complex matrices:

\begin{lem}
\label{lemma:SVDcomplex}
Let $C$ be an  $n \times n$  matrix over the complex numbers such that $CC^t$ has distinct eigenvalues. Then: 
\begin{itemize}
\item[(i)]There exist orthogonal matrices $P,Q \in \mathrm{O}_n(\mathbb{C})$ such that $P C Q^t$ is a diagonal matrix.
\item[(ii)] If $M$ is another such matrix such that $MM^t$ and $CC^t$ are conjugate, then there exist orthogonal matrices $P,Q \in \mathrm{O}_n(\mathbb{C})$ such that $PCQ^t = M$. 
\end{itemize}
\end{lem}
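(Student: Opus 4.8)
\emph{Proof proposal.} The plan is to deduce both parts from the single fact that a complex \emph{symmetric} matrix with distinct eigenvalues can be diagonalized by a complex orthogonal matrix, applied to $A = CC^t$ (which is symmetric, since $(CC^t)^t = CC^t$). To establish this fact I would choose eigenvectors $v_1,\dots,v_n$ of $A$ and observe that for $i\neq j$ one has $\lambda_i\, v_i^t v_j = (Av_i)^t v_j = v_i^t A v_j = \lambda_j\, v_i^t v_j$, so distinctness of the eigenvalues forces $v_i^t v_j = 0$. The one genuine subtlety — and the crux of the whole lemma — is to rule out \emph{isotropic} eigenvectors: if some $v_i^t v_i = 0$, then $v_i$ would be orthogonal to the entire basis $\{v_j\}$ with respect to the symmetric bilinear form $(u,w)\mapsto u^t w$, placing $v_i$ in the radical of that form; since $u^t w$ is non-degenerate on $\mathbb{C}^n$ and $v_i\neq 0$, this is impossible. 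Normalizing the $v_i$ then produces $P \in \mathrm{O}_n(\mathbb{C})$ with $PAP^t = \Lambda$ diagonal.

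For part (i), I would set $B = PC$, so that $BB^t = P(CC^t)P^t = \Lambda$, reducing the problem to finding $Q \in \mathrm{O}_n(\mathbb{C})$ with $BQ^t$ diagonal. Writing $b_i$ for the rows of $B$ (viewed as vectors), the identity $BB^t = \Lambda$ says exactly that the $b_i$ are mutually orthogonal with $b_i^t b_i = \lambda_i$. If every $\lambda_i \neq 0$, the normalized rows $q_i = \lambda_i^{-1/2} b_i$ form the rows of the required $Q$, and $BQ^t$ is diagonal by a direct computation. Since the $\lambda_i$ are distinct, at most one can vanish, so the only remaining case is a single $\lambda_n = 0$; this is handled by a rank count. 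Indeed $\det B = 0$ then forces $b_n$ into the span of the (independent) rows $b_1,\dots,b_{n-1}$, and orthogonality to all of them forces $b_n = 0$, so that row causes no trouble; one simply completes $q_1,\dots,q_{n-1}$ to an orthonormal basis using non-degeneracy of the form on the $1$-dimensional orthogonal complement.

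For part (ii), I would diagonalize both matrices by part (i): $P_1 C Q_1^t = D_C$ and $P_2 M Q_2^t = D_M$ with $D_C, D_M$ diagonal. Squaring shows that the diagonal entries of $D_C^2$ (resp.\ $D_M^2$) are the eigenvalues of $CC^t$ (resp.\ $MM^t$). Since $CC^t$ and $MM^t$ are conjugate they share the same eigenvalues, all simple; hence there is a permutation matrix $\Pi$ with $\Pi D_C^2 \Pi^t = D_M^2$, and then $\Pi D_C \Pi^t = E D_M$ for a diagonal sign matrix $E$ with entries in $\{\pm 1\}$. Both $\Pi$ and $E$ lie in $\mathrm{O}_n(\mathbb{C})$, so substituting $D_C = P_1 C Q_1^t$ and $D_M = P_2 M Q_2^t$ and absorbing $\Pi, E$ into the diagonalizing matrices expresses $M = PCQ^t$ with $P, Q$ products of orthogonal matrices, hence themselves orthogonal.

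The main obstacle is confined to the first paragraph: over $\mathbb{C}$ with the bilinear (not Hermitian) form, symmetric matrices need not be orthogonally diagonalizable precisely because of isotropic vectors, so the distinct-eigenvalue hypothesis must be invoked twice — once to force orthogonality of eigenvectors, and once (together with non-degeneracy of $u^t w$) to exclude isotropic eigenvectors and, in the $\lambda = 0$ case, to force the offending row of $B$ to vanish. Everything after that is bookkeeping with permutation and sign matrices, which are orthogonal and therefore cost nothing.
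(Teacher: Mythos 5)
Your proof is correct, but it takes a genuinely different route from the paper's. The paper proves (i) by citing the Choudhury--Horn theorem on complex orthogonal equivalence (cf.\ also Craven), which reduces the whole problem to verifying the single rank identity $\Rank(CC^t) = \Rank(C)$; that identity is then checked by noting that distinctness of the eigenvalues of $CC^t$ forces $\Rank(CC^t) \in \{n-1, n\}$ and running a short determinant argument in each case. The paper dismisses (ii) as ``straightforward'' and omits it entirely. You, by contrast, prove everything from scratch: you orthogonally diagonalize the complex symmetric matrix $CC^t$ directly, correctly isolating the isotropic-eigenvector issue (which is indeed the genuine subtlety for the bilinear form $u^t w$ over $\mathbb{C}$, and is exactly what the rank hypothesis in Choudhury--Horn is engineered to exclude), then you construct $Q$ explicitly by normalizing the rows of $B = PC$, with the single possible zero eigenvalue handled by a rank-and-orthogonality argument that parallels the paper's $\Rank(CC^t)=n-1$ case; finally you supply the permutation-and-sign-matrix bookkeeping for (ii) that the paper leaves unwritten. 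What the paper's route buys is brevity, at the cost of outsourcing the real content to the literature; what your route buys is self-containedness and the fact that part (ii) is actually verified rather than asserted. One small point worth making explicit in your write-up: in the $\lambda_n = 0$ case, the orthogonal complement of $\mathrm{span}(q_1,\dots,q_{n-1})$ is non-degenerate because that span is non-degenerate (its Gram matrix is the identity) and the ambient form is non-degenerate --- this is the standard fact licensing your completion to an orthogonal basis.
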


\begin{proof} As  the proof of (ii) is straightforward, we will only prove (i). Denote the rank of a matrix $A$ by $\Rank(A)$.
By \cite[Thm.~2]{Choudury-Horn} (cf.~\cite[Thm.~3]{Craven}),  it will suffice to show that $\Rank(CC^t) = \Rank(C)$. 
Since $CC^t$ has distinct eigenvalues, we infer that $\Rank(CC^t)$ is either $n$ or $n-1$. 
If $\Rank(CC^t) = n$, then $CC^t$ is invertible, and consequently so is $C$, so  $\Rank(CC^t) = \Rank(C)$ in this case.

When \ $\Rank(CC^t) = n-1$, it follows that $(\det C)^2 = \det (CC^t) = 0$ and therefore $\det C = 0$ as well. Consequently,
\[
\Rank(CC^t) = n-1 \le \Rank(C)
\]
(since $\Rank(AB) \leq \min (\Rank(A), \Rank(B))$. Hence, $\Rank(C)$ is either $n-1$ or $n$. But $\Rank(C) \ne n$ as $\det C = 0$. Hence,
$\Rank(C) = n-1 = \Rank(CC^t)$. \qedhere
\end{proof}

\begin{proof}[Proof of Theorem \ref{thm:geometricQuotientLMM} ]
Because $X_0$ is affine, it suffices to show that the orbits of $G(\mathbb{C}) = \mathrm{SO}_3(\mathbb{C})\times \mathrm{SO}_3(\mathbb{C}) $ on $X_0(\mathbb{C})$ are closed (\cite[I.1.3]{GIT}).  If $C \in X_0(\mathbb{C})$ represents a geometric point, we  prove that its orbit is closed by showing that it corresponds to the zero set of the characteristic polynomial of $CC^t$. Indeed, this orbit consists of elements of the form $M = R_1CR_2^t$, with $R_1, R_2 \in \mathrm{SO}_3(\mathbb{C})$. For any such $M$, the matrix $MM^t$ is conjugate to $CC^t$, and therefore it is annihilated by the characteristic polynomial of $CC^t$. Conversely, suppose that a matrix $M$ has the property that $MM^t$ has distinct eigenvalues and is annihilated by the characteristic polynomial of $CC^t$. Then the minimal polynomial of $MM^t$ is equal to the characteristic polynomial and it divides the characteristic polynomial of $CC^t$, so that the characteristic polynomial of $MM^t$ must be equal to the characteristic polynomial of $CC^t$. Since the matrices have distinct eigenvalues, this means that $CC^t$ is conjugate to $MM^t$. by Lemma \ref{lemma:SVDcomplex}, there exist $P,Q \in \mathrm{O}_3(\mathbb{C})$ such that $PCQ^t = M$, and moreover we may assume that $\det P = \det Q = 1$, as in the proof of Proposition \ref{prop:relativeSectionLMM}. Therefore $C$ and $M$ are in the same $G$-orbit, and we have proved that the $G$-orbit of $C$ corresponds to the zero locus of the characteristic polynomial of $CC^t$, a closed subset of  $X_0(\mathbb{C})$. 
\end{proof}

Next, we calculate the ring of polynomial invariants. To construct the invariants explicitly, recall that the Bloch matrix representation of LMM states is of the form 
\[
B(\rho) = \left( \begin{tabular}{c|c}
 1 & 0 \\
 \hline
 0 & $C$ \\ 
 \end{tabular} \right)(\rho) \in \mathbb{R}^{9}.
\]
where $C$ is the 3-by-3 matrix of 2-point correlation functions. Let 
\begin{equation}
\label{eqn:LMMinvariants}
t_2 = \tr CC^t, \quad t_3 = \det C, \quad t_4 =  \tr (CC^t)^2. 
\end{equation}
It is easy to check that these are polynomial functions in $\mathbb{R}[\mathscr L_{M}]$ that are invariant under the action of $\operatorname{SO}_{3}(\mathbb{R})\times \operatorname{SO}_{3}(\mathbb{R})$ and are of degrees 2,3 and 4 respectively. 

\begin{thm}
\label{thm:LMM-invariants}
The ring of invariants of LMM states is the polynomial ring $\mathbb{R}[t_2,t_3, t_4]$, where each $t_i$ is graded in degree $i$, $i = 2,3,4$.  
\end{thm}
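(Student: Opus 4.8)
The plan is to compute $\mathbb{R}[X]^G$ (where $X = B\mathscr L_M \cong M_3(\mathbb{R})$ and $G = \operatorname{SO}_3(\mathbb{R})\times\operatorname{SO}_3(\mathbb{R})$) by restricting to the relative section $S$ of diagonal matrices, exploiting that this restriction is injective on polynomial invariants and that the explicit invariants $t_2,t_3,t_4$ of \eqref{eqn:LMMinvariants} already restrict to a generating set for the Weyl-group invariants on $S$. Concretely, I would first recall from Proposition \ref{prop:relativeSectionLMM} that $S$ is a relative section with Weyl group $W(S)\cong\operatorname{O}$. As noted after Definition \ref{def:relativeSection}, the restriction homomorphism $\mathbb{R}[X]^G\to \mathbb{R}[S]^{N(S)}$ is injective, and by the discussion following Definition \ref{remark:WeylGroup} one has $\mathbb{R}[S]^{N(S)}=\mathbb{R}[S]^{W(S)}$. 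The theorem then reduces to two tasks: (a) computing $\mathbb{R}[S]^{W(S)}$ explicitly, and (b) checking that $t_2,t_3,t_4$ restrict to generators of it. Granting these, the image of the injection $\mathbb{R}[X]^G\hookrightarrow\mathbb{R}[S]^{W(S)}$ contains a generating set, hence is everything, so restriction is an isomorphism; pulling back identifies $\mathbb{R}[X]^G$ with the subring generated by $t_2,t_3,t_4$, and their algebraic independence follows from that of their images.

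Next I would make the $W(S)$-action on $S$ explicit. Writing a point of $S$ as $C=\operatorname{diag}(x_1,x_2,x_3)$ identifies $\mathbb{R}[S]$ with $\mathbb{R}[x_1,x_2,x_3]$. An element $(E_1P,E_2P)\in N(S)$ acts by $C\mapsto E_1PCP^tE_2$, where the permutation $P$ permutes the $x_i$ by the corresponding $\pi\in S_3$ while the diagonal sign matrices multiply the $i$-th coordinate by $s_i=(E_1)_{ii}(E_2)_{ii}$. Since $\det(E_\ell P)=1$ forces $\prod_i(E_\ell)_{ii}=\operatorname{sgn}(\pi)$ for $\ell=1,2$, we get $\prod_i s_i=\operatorname{sgn}(\pi)^2=1$, and conversely a direct check shows every pair $(\pi,(s_i))$ with $\prod_i s_i=1$ is realized. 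Thus $W(S)$ acts on $\mathbb{R}[x_1,x_2,x_3]$ as the group of signed permutations in which the number of sign changes is \emph{even}.

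I would then compute the invariants of this action. Invariance under the even sign changes forces each monomial to have all three exponents of the same parity, so the sign-invariant subring is $\mathbb{R}[x_1^2,x_2^2,x_3^2]\oplus x_1x_2x_3\cdot\mathbb{R}[x_1^2,x_2^2,x_3^2]$, i.e. $\mathbb{R}[y_1,y_2,y_3,q]/(q^2-y_1y_2y_3)$ with $y_i=x_i^2$ and $q=x_1x_2x_3$. Imposing the remaining $S_3$-symmetry replaces $\mathbb{R}[y_1,y_2,y_3]$ by its symmetric subring $\mathbb{R}[e_1,e_2,e_3]$ (elementary symmetric functions in the $y_i$) while fixing $q$; using $q^2=e_3$ to eliminate $e_3$ yields
\[
\mathbb{R}[S]^{W(S)}=\mathbb{R}[e_1,e_2,q],
\]
a genuine polynomial ring in the algebraically independent generators $e_1$ (degree $2$), $q$ (degree $3$), $e_2$ (degree $4$). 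Finally I would match these with the restrictions of \eqref{eqn:LMMinvariants}: on $S$ one reads off $t_2|_S=e_1$, $t_3|_S=q$, and $t_4|_S=x_1^4+x_2^4+x_3^4=e_1^2-2e_2$, so that $\{t_2|_S,t_3|_S,t_4|_S\}$ and $\{e_1,q,e_2\}$ generate the same ring. This establishes (a) and (b) simultaneously and, by the reduction above, proves $\mathbb{R}[X]^G=\mathbb{R}[t_2,t_3,t_4]$ as graded rings.

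The main obstacle is the finite-group invariant computation of the preceding paragraph: one must pin down precisely that $W(S)$ consists of \emph{even}-signed (not arbitrary) permutations, and then observe that the extra generator $q=\det C$ together with the relation $q^2=e_3$ is exactly what collapses the naive four generators $e_1,e_2,e_3,q$ into a free polynomial ring on three. Everything else — the injectivity of restriction and the passage from ``contains generators'' to ``is an isomorphism'' — is formal once the relative section and the explicit invariants of \eqref{eqn:LMMinvariants} are in hand.
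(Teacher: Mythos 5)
Your proposal is correct, and its overall skeleton coincides with the paper's proof: both reduce to the relative section $S$ of diagonal matrices (using Proposition \ref{prop:relativeSectionLMM} and $GS = B\mathscr L_M$ for injectivity of restriction), both compute $\mathbb{R}[S]^{W(S)}$, and both finish by checking that the restrictions of $t_2, t_3, t_4$ from \eqref{eqn:LMMinvariants} generate that ring. The genuine difference is in the key computational step. The paper verifies that $s_1 = x_1^2+x_2^2+x_3^2$, $s_2 = x_1x_2x_3$, $s_3 = x_1^4+x_2^4+x_3^4$ generate $\mathbb{R}[S]^{W(S)}$ as a polynomial ring by invoking the Jacobian criterion of \cite[Theorem 3.9.4]{CIT}: the Jacobian determinant of $(s_1,s_2,s_3)$ is nonzero and the product of degrees $2\cdot 3\cdot 4 = 24$ equals $|W(S)|$. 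You instead identify the $W(S)$-action on $S$ explicitly---permutations composed with an \emph{even} number of sign changes---and compute the invariant ring by hand in two stages: the even sign changes yield $\mathbb{R}[y_1,y_2,y_3,q]/(q^2-y_1y_2y_3)$ with $y_i = x_i^2$, $q = x_1x_2x_3$, and the residual $S_3$-action then gives $\mathbb{R}[e_1,e_2,e_3,q]/(q^2-e_3) \cong \mathbb{R}[e_1,e_2,q]$. Your route is more elementary and self-contained, and it makes precise a point the paper leaves implicit: the invariance of $x_1x_2x_3$ holds exactly because the action is by even-signed permutations rather than by the standard representation of the chiral octahedral group (both groups are abstractly isomorphic, of order $24$, which is why the paper's degree-product check is insensitive to this distinction). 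The paper's route is shorter once one accepts the cited criterion, since it needs only the order of $W(S)$ and a single determinant computation. Both arguments then conclude identically via $t_2|_S = e_1$, $t_3|_S = q$, $t_4|_S = e_1^2 - 2e_2$, so the restriction map is surjective, hence an isomorphism of graded algebras.
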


\begin{proof}
We prove the theorem by showing that $S$ is Chevalley section (Definition \ref{def:ChevalleySec}) for the action of $G = \operatorname{SO}_{3}(\mathbb{R})\times \operatorname{SO}_{3}(\mathbb{R})$ on $X = B\mathscr L_M$. Since we proved already that $S$ is a relative section in Proposition \ref{prop:relativeSectionLMM}, it suffices to show that the restriction homomorphism of invariants
\[
\mathrm{res}_S: \mathbb{R}[X]^G \longrightarrow \mathbb{R}[S]^{W(S)} 
\]
is an isomorphism. To show injectivity,  note that if a polynomial invariant  $f\in \mathbb{R}[V]^G$ satisfies $\mathrm{res}_S(f) = 0$, then the value of this invariant must be zero on every element of $GS$, since $f$ is constant on orbits. But $GS=X$ (as shown in the proof of Proposition \ref{prop:relativeSectionLMM}) , therefore $f = 0$ on all of $X$ and $\mathrm{res}_S$ is injective. Next, we must prove that $\mathrm{res}_S$ is also surjective. This can be done by using our knowledge of the Weyl group $W(S)$ and by computing the ring of invariants $\mathbb{R}[S]^{W(S)}$ directly. Consider the invariants 
\[
s_1 = x_1^2 + x_2^2 + x_3^2, \quad s_2 = x_1x_2x_3, \quad s_3 = x_1^4 + x_2^4 + x_3^4 \in \mathbb{R}[S]^{W(S)}
\]
of degrees 2,3 and 4 respectively. By a simple calculation, 
\[
\det (\partial s_i/\partial x_j)_{i,j = 1,2,3} = 8(x_1^2x_3^4 - x_1^2x_2^4 - x_2^2x_3^4 + x_1^4x_2^2 + x_3^2x_2^4 - x_1^4x_3^2) \neq 0
\]
and $\deg s_1\cdot \deg s_2 \cdot \deg s_3 = 24 = |W(S)|$, so that \[
\mathbb{R}[S]^{W(S)} = \mathbb{R}[s_1, s_2, s_3]
\]
is a polynomial ring generated by $s_1, s_2, s_3$ \cite[Theorem 3.9.4]{CIT}. These three invariants correspond to the restriction of the $G$-invariants $t_1 = \tr CC^t$, $t_2 = \det CC^t $ and $t_3 = \tr (CC^t)^2$. It follows that the restriction homomorphism $
\mathrm{res}_S$ is also surjective (and degree-preserving), and therefore it is an isomorphism of graded $\mathbb{R}$-algebras.  
\end{proof}

\begin{remark}
\label{remark:inequalities}
Theorem \ref{thm:LMM-invariants}, which is a result about polynomial invariants, does not take into account the positivity condition on mixed states. The  positivity condition imposes restrictions on the values of the three invariants $t_1, t_2, t_3$. In particular, using the inequalities of \cite{Gamel} we deduce the bounds 
$0\leq t_2 \leq 3,\quad t_3 \leq (1-t_2)/2 ,\quad 0 \leq t_4 \leq -2t_3 +(1-t_2)^2/4$.
\end{remark}

We can also prove that the rational field of invariants is rational of transcendence degree 3, and provide explicit generators:

\begin{cor} \label{corollary:rationalInvLMM}
The field of rational invariants of LMM states is isomorphic to $\mathbb{R}(t_1, t_2, t_3)$. 
\end{cor}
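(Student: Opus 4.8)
The plan is to deduce this corollary directly from Theorem \ref{thm:LMM-invariants} by showing that the field of rational invariants coincides with the fraction field of the polynomial ring of invariants already computed there. Once that identification is in place, pure transcendence of degree $3$ is immediate, since $\mathbb{R}[t_2,t_3,t_4]$ is a polynomial ring and its fraction field is $\mathbb{R}(t_2,t_3,t_4)$.

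The point requiring care is precisely the equality $\mathbb{R}(\mathscr{L}_M)^G = \operatorname{Frac}\bigl(\mathbb{R}[\mathscr{L}_M]^G\bigr)$: as the diagonal $\mathbb{G}_m$-action on $\mathbb{A}^2$ shows (cf.\ Remark \ref{remark:fractionFields}), this equality can fail in general. The tool that guarantees it here is Lemma \ref{lemma:localizationAndInvariants}. To apply it, I first observe that the action of $\U_2\times\U_2$ on $B\mathscr{L}_M$ factors through the surjection $r\times r\colon \U_2\times\U_2 \to \operatorname{SO}_{3}(\mathbb{R})\times\operatorname{SO}_{3}(\mathbb{R})$, so that both the ring and the field of invariants are unchanged upon replacing $G$ by $G'=\operatorname{SO}_{3}(\mathbb{R})\times\operatorname{SO}_{3}(\mathbb{R})$. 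This replacement is essential: $\U_2$ carries the non-trivial determinant character, whereas $G'$, being a product of connected semisimple groups, is connected with trivial character group---exactly the hypotheses Lemma \ref{lemma:localizationAndInvariants} demands.

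Taking $A=\mathbb{R}[B\mathscr{L}_M]$, which is a polynomial ring in $9$ variables and hence a UFD, I then choose the $G'$-invariant multiplicative set $S=A\setminus\{0\}$, so that $S^{-1}A=\operatorname{Frac}(A)=\mathbb{R}(\mathscr{L}_M)$ and $S^{G'}=A^{G'}\setminus\{0\}$. Lemma \ref{lemma:localizationAndInvariants} then yields
\[
\mathbb{R}(\mathscr{L}_M)^{G'} = (S^{-1}A)^{G'} = (S^{G'})^{-1}(A^{G'}) = \operatorname{Frac}(A^{G'}).
\]
By Theorem \ref{thm:LMM-invariants}, $A^{G'}=\mathbb{R}[t_2,t_3,t_4]$, whose fraction field is $\mathbb{R}(t_2,t_3,t_4)$, purely transcendental of transcendence degree $3$.

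The only genuine obstacle is the verification of the localization hypotheses, in particular the recognition that one must descend to the semisimple group $G'$ in order to kill the character group; the remainder is formal. As an alternative route avoiding Lemma \ref{lemma:localizationAndInvariants} entirely, I could argue through the relative section $S$ of Proposition \ref{prop:relativeSectionLMM}: by Theorem \ref{thm:relativeSectionsAreBirational} one has $\mathbb{R}(\mathscr{L}_M)^G\cong \mathbb{R}(S)^{W(S)}$, and since $W(S)$ is \emph{finite}, the field of rational invariants of a finite group is automatically the fraction field of its polynomial invariants (Remark \ref{remark:fractionFields}), which was already identified with $\operatorname{Frac}(\mathbb{R}[s_1,s_2,s_3])\cong\mathbb{R}(t_2,t_3,t_4)$.
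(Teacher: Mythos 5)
Your proposal is correct, and your primary argument is genuinely different from the paper's. The paper proves the corollary by recycling its section machinery: restriction to the Chevalley section $S$ gives $\mathbb{R}[X]^G \cong \mathbb{R}[S]^{W(S)}$, the finiteness of $W(S)$ together with Remark \ref{remark:fractionFields} identifies $\mathbb{R}(S)^{W(S)}$ with the fraction field of $\mathbb{R}[S]^{W(S)}$, and Theorem \ref{thm:relativeSectionsAreBirational} then transports this back to $\mathbb{R}(X)^G$ --- this is precisely the ``alternative route'' you sketch in your last sentence. Your primary route instead applies Lemma \ref{lemma:localizationAndInvariants} with the full multiplicative set $A\setminus\{0\}$ (recovering the cited Lemma 2.2 of Colliot-Th\'el\`ene--Sansuc) directly to $B\mathscr{L}_M$, and your verification of its hypotheses is the right point of care: the coordinate ring is a polynomial ring, hence a UFD, and the descent from $\U_2\times\U_2$ to $\operatorname{SO}_{3}(\mathbb{R})\times\operatorname{SO}_{3}(\mathbb{R})$ is exactly what kills the determinant characters and makes the group connected with trivial character group. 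What each approach buys: the paper's route needs only the elementary finite-group case of the fraction-field identification and exhibits the generators as restrictions to the section, while yours establishes the fraction-field equality $\mathbb{R}(\mathscr{L}_M)^{G}=\operatorname{Frac}\bigl(\mathbb{R}[\mathscr{L}_M]^{G}\bigr)$ for the full group action on the nose (which is literally what Theorem \ref{thm:lmmStatesLetter} asserts), at the cost of the stronger group-theoretic hypotheses. One cosmetic remark: the corollary's notation $t_1,t_2,t_3$ follows the relabeling $t_1=\tr CC^t$, $t_2=\det CC^t$, $t_3=\tr (CC^t)^2$ used in the proof of Theorem \ref{thm:LMM-invariants}, so your generators $t_2,t_3,t_4$ from \eqref{eqn:LMMinvariants} are the same polynomials up to renaming; this is an inconsistency in the paper, not in your argument.
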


\begin{proof}
Let $G = \operatorname{SO}_{3}(\mathbb{R})\times \operatorname{SO}_{3}(\mathbb{R})$ and $X = B\mathscr L_M$ and let $S\subseteq X$ be the Chevalley section found in the proof of Theorem \ref{thm:LMM-invariants}. Restriction of invariants gives an isomorphism $\mathbb{R}[X]^G \cong \mathbb{R}[S]^{W(S)}$, the latter which is a polynomial ring in three variables.  Because $W(S)$ is finite, the field of rational functions $\mathbb{R}(S)^{W(S)}$ is the fraction field of $\mathbb{R}[S]^{W(S)}$ (Remark \ref{remark:fractionFields}), so it is purely transcendental and of transcendence degree 3.  By Theorem \ref{thm:relativeSectionsAreBirational}, it follows that $\mathbb{R}(X)^G$ must also be purely transcendental on three generators. These generators can be chosen to be $t_1, t_2, t_3$, since these polynomials are algebraically independent. 
\end{proof}

\section{Symmetrically mixed states}
\label{section:symStates}

We now consider the subvariety $X = \mathscr L_S\subseteq \mathscr L$ of symmetrically mixed states;  for convenience, we sometimes refer to these as {\it symmetric states}. Recall from Definition \ref{definition:symmetricallyMixedStates} that
such mixed states are characterized by the property that their density matrix commutes with the twist map 
$\tau: V\otimes V \rightarrow V\otimes V$.

By a straightforward calculation that we omit, a mixed state $\rho$ lies in $\scr L_S$ if and only if 
its Bloch matrix representation is of the form 
\[
B(\rho) = \left( \begin{tabular}{c|c}
 1 & $v^t$ \\
 \hline
 $v$ & $A$ \\ 
 \end{tabular} \right), 
\] 
in which $A \in \text{M}_3(\Bbb R)$ is symmetric, i.e., $A = A^T$. 
It follows that $B\scr L_S$ is a 9-dimensional subspace of $B\mathscr L$, isomorphic to  a direct sum of $\mathbb{R}^3$ and the vector space of $3\times 3$ symmetric matrices, which is 6-dimensional. 

Arbitrary local unitary operators do not preserve symmetric states. Equivalently, the subspace $B\scr L_S$ is not invariant under the $\operatorname{SO}_{3}(\mathbb{R})\times \operatorname{SO}_{3}(\mathbb{R})$ action $\varphi$. Instead, we get an action of $\operatorname{SO}_{3}(\mathbb{R})$ embedded diagonally inside the product $\operatorname{SO}_{3}(\mathbb{R})\times \operatorname{SO}_{3}(\mathbb{R})$. More precisely, the representation on symmetric states 
$
\varphi_S: \operatorname{SO}_{3}(\mathbb{R}) \longrightarrow GL(B\scr L_S)
$
is given by 

\[
R\cdot \left( \begin{tabular}{c|c}
 1 & $v^t$ \\
 \hline
 $v$ & $A$ \\ 
 \end{tabular} \right) = 
 \left( \begin{tabular}{c|c}
 1 & $(Rv)^t$ \\
 \hline
 $Rv$ & $RAR^t$ \\ 
 \end{tabular} \right) .
\]

We now construct a relative section (Definition \ref{def:relativeSection}) for the action of $\operatorname{SO}_{3}(\mathbb{R})$ on symmetric states. Let 
\[
S = \left\{ \left( \begin{tabular}{c|c}
 1 & $v^t$ \\
 \hline
 $v$ & $A$ \\ 
 \end{tabular} \right) \in B\mathscr L_S: A \text{ is diagonal} \right \}.
\]
This is a six-dimensional subspace of $B\scr L_S$  whose Weyl group $W(S)$ is the normalizer of the diagonal matrices inside $\operatorname{SO}_{3}(\mathbb{R})$. As in Section \ref{section:LMMstates}, this group can be identified with the chiral octahedral group $\mathrm{O}$ of $3\times 3$ signed permutation matrices of determinant 1. We first prove that the restriction map on invariants is injective: 

\begin{prop}
\label{prop:injectionSymStates}
The restriction homomorphism $\mathrm{res}_S: \mathbb{R}[B\scr L_S]^{\operatorname{SO}_{3}(\mathbb{R})} \rightarrow \mathbb{R}[S]^{W(S)} $ is injective. 
\end{prop}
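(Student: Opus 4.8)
The plan is to reduce injectivity of $\mathrm{res}_S$ entirely to property (i) of a relative section (Definition \ref{def:relativeSection}), namely to the assertion that $\overline{GS} = B\scr L_S$, where $G = \operatorname{SO}_{3}(\mathbb{R})$ acts via $\varphi_S$. Indeed, as observed immediately after Definition \ref{def:relativeSection}, once one knows $\overline{GS} = B\scr L_S$, injectivity is automatic: a $G$-invariant polynomial $f$ with $\mathrm{res}_S(f)=0$ is constant on $G$-orbits and vanishes on $S$, hence vanishes on all of $GS$, and therefore on the Zariski closure $\overline{GS} = B\scr L_S$, so $f=0$. Thus the only thing requiring proof is that the $G$-orbit of $S$ is dense in $B\scr L_S$; in fact I expect to show the stronger statement $GS = B\scr L_S$ exactly.

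To establish $GS = B\scr L_S$ I would argue directly from the action formula for $\varphi_S$. Fix an arbitrary element $\begin{pmatrix} 1 & w^t \\ w & A' \end{pmatrix}$ of $B\scr L_S$, so that $A'$ is a real symmetric $3\times 3$ matrix and $w\in\mathbb{R}^3$ is arbitrary. By the spectral theorem for real symmetric matrices, there is an orthogonal matrix $R$ with $R^t A' R = A$ diagonal. One may arrange $\det R = 1$: replacing $R$ by $RD$ for a diagonal sign matrix $D$ with entries in $\{\pm 1\}$ changes neither $A$ (since $D$ commutes with the diagonal matrix $A$ and $D^2 = I$) nor the diagonality of $A$, and a suitable choice of $D$ forces the determinant to be $+1$. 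Setting $v = R^t w$, the element $\begin{pmatrix} 1 & v^t \\ v & A \end{pmatrix}$ lies in $S$, because its lower–right block $A$ is diagonal while its vector component $v$ is unconstrained in $S$; applying $R$ then returns the original matrix $\begin{pmatrix} 1 & w^t \\ w & A' \end{pmatrix}$. Hence every element of $B\scr L_S$ lies in $GS$, giving $GS = B\scr L_S$.

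The argument is short precisely because the candidate section is generous: $S$ imposes only the single condition that the symmetric block $A$ be diagonal, while leaving the vector $v$ entirely free, so the orbit of $S$ already fills $B\scr L_S$ with no need to pass to closures. Consequently I anticipate no genuine obstacle to injectivity; the one point demanding care is the determinant-one normalization of the diagonalizing rotation, which is handled by the sign-flip observation above and is exactly where working in $\operatorname{SO}_{3}(\mathbb{R})$ rather than in $\mathrm{O}_3(\mathbb{R})$ matters. The substantive difficulty of verifying that $S$ is a \emph{genuine} relative section lies instead in property (ii) of Definition \ref{def:relativeSection}, controlling which group elements map a dense open part of $S$ into itself and identifying them with the normalizer $N(S)$; I would expect that to be treated separately. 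Injectivity of $\mathrm{res}_S$, however, requires only property (i), and is therefore fully established by the density statement $GS = B\scr L_S$ alone.
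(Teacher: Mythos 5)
Your proposal is correct and follows essentially the same route as the paper's own proof: both use the spectral theorem to diagonalize the symmetric block, fix the determinant with a diagonal sign matrix so the diagonalizing matrix lies in $\operatorname{SO}_{3}(\mathbb{R})$, conclude that every orbit meets $S$ (i.e.\ $GS = B\scr L_S$), and then deduce that an invariant vanishing on $S$ vanishes identically. The computations justifying the sign-flip step ($DAD = A$ for diagonal $D$, $A$ with $D^2 = I$) and the recovery of the original point via the action formula are accurate.
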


\begin{proof}
Let $A$ be a symmetric matrix. By the Spectral Theorem, there is an orthogonal matrix $U$ such that $UAU^T = D$ is diagonal. As in the proof of Proposition \ref{prop:relativeSectionLMM}, we can multiply $U$ by a diagonal matrix $D_0$ consisting of only $\pm 1$ such that $\det (D_0U) = 1$, so that $D_0DD_0^T = D = (D_0U)A(D_0U)^T$. Therefore we may assume that $U = R$ belongs to $\operatorname{SO}_{3}(\mathbb{R})$, so that every $\operatorname{SO}_{3}(\mathbb{R})$-orbit in $B\scr L_S$ intersects $S$ in at least one point. Suppose now that $f \in \mathbb{R}[B\scr L_S]^{\operatorname{SO}_{3}(\mathbb{R})}$ is a polynomial invariant such that $\mathrm{res}_S(f) = 0$. Then $f = 0$, since every $\operatorname{SO}_{3}(\mathbb{R})$-orbit contains a point of $S$, and the value of $f$ is constant on $\operatorname{SO}_{3}(\mathbb{R})$-orbits. 
\end{proof}

\begin{remark}
\label{rmk:S-invariant}
The restriction homomorphism $\mathrm{res}_S$ is however {\em not} surjective in this case, as opposed to the analogous situation for LMM states. To see this, note that if we write $v = (v_1, v_2, v_3)^T$, then clearly the polynomial function 
\[
f\left( \begin{tabular}{c|c}
 1 & $v^t$ \\
 \hline
 $v$ & $A$ \\ 
 \end{tabular} \right) = (v_1v_2v_3)^2 
\]
gives an invariant in $\mathbb{R}[S]^{W(S)}$ that does not come from restriction of any invariant in $\mathbb{R}[B\scr L_S]^{\operatorname{SO}_{3}(\mathbb{R})}$. It follows that $S$ is {\em not} a Chevalley section, since $\mathrm{res}_S$ is not an isomorphism. The ring of polynomial invariants $\mathbb{R}[S]^{W(S)}$ can be computed on Macaulay2 in a reasonable amount of time, but it has a quite involved presentation. This calculation, along with a quick look at the Hilbert-Molien series, suggests that the ring of invariants $\mathbb{R}[B\scr L_S]^{\operatorname{SO}_{3}(\mathbb{R})}$ is itself quite complex, and it does not seem useful for practical applications to attempt to identify its image in $\mathbb{R}[S]^{W(S)}$ directly. 
\end{remark}

We thus turn our attention instead to the problem of calculating the field of rational invariants $\mathbb{R}(B\scr L_S)^{\operatorname{SO}_{3}(\mathbb{R})}$, which seems more tractable.  We first note that $S$ is a relative section:

\begin{thm}
\label{thm:relativeSectionSymmetricStates}
The subvariety $S$ is a relative section for the action of $\operatorname{SO}_{3}(\mathbb{R})$ on $B\scr L_S$. Consequently, there is an isomorphism
\[
\mathbb{R}(B\scr L_S)^{\operatorname{SO}_{3}(\mathbb{R})} \cong \mathbb{R}(S)^{W(S)} 
\]
given by the restriction map.
\end{thm}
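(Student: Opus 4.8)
The plan is to verify the two defining conditions of a relative section (Definition \ref{def:relativeSection}) for $S \subseteq B\scr L_S$ under the action of $G = \operatorname{SO}_{3}(\mathbb{R})$; the isomorphism of fields then follows immediately from Theorem \ref{thm:relativeSectionsAreBirational}. Condition (i), that $\overline{GS} = B\scr L_S$, is already essentially established in the proof of Proposition \ref{prop:injectionSymStates}: given any point of $B\scr L_S$ with symmetric block $A$, the Spectral Theorem diagonalizes $A$ via an orthogonal $U$, and the same $\pm 1$-adjustment used there produces $R \in \operatorname{SO}_{3}(\mathbb{R})$ with $RAR^t$ diagonal. Since $v$ simultaneously transforms as $Rv$, the orbit of this point meets $S$, so in fact $GS = B\scr L_S$ on the nose (not merely densely), which gives (i).

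The substantive work is condition (ii): I must exhibit a $G$-invariant dense open $X_0 \subseteq B\scr L_S$ such that, setting $S_0 = X_0 \cap S$, any $R \in \operatorname{SO}_3(\bar k)$ with $R \cdot S_0(\bar k) \cap S_0(\bar k) \neq \emptyset$ lies in $N(S)(\bar k)$. The natural choice is $X_0 = \{\,(v,A) : A \text{ has distinct eigenvalues}\,\}$, the complement of the discriminant locus of the characteristic polynomial of $A$ (a genuine $G$-invariant, as $A \mapsto RAR^t$ is conjugation). Then $S_0$ consists of pairs $(v, D)$ with $D$ diagonal having distinct diagonal entries. The key point is that if $R D R^t = D'$ with $D, D'$ diagonal and $D$ having distinct eigenvalues, then $R$ conjugates one diagonal matrix with distinct entries to another, forcing $R$ to be a signed permutation matrix — hence $R$ normalizes the space of diagonal matrices and so $R \in N(S)(\bar k)$. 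This is the same normalizer computation already carried out for LMM states in Section \ref{section:LMMstates}, and it identifies $N(S)$ with the signed permutation matrices of determinant one, i.e.\ the chiral octahedral group $\mathrm{O}$.

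I anticipate the main subtlety to be a point that is trivial over $\mathbb{R}$ but must be handled with care over the algebraic closure $\bar k = \mathbb{C}$, since Definition \ref{def:relativeSection}(ii) is a condition on $\bar k$-points. Over $\mathbb{C}$ one cannot invoke the Spectral Theorem, so the diagonalization argument underlying condition (ii) must instead rest on the purely algebraic fact that an orthogonal matrix conjugating a diagonal matrix with \emph{distinct} eigenvalues to another diagonal matrix is necessarily a signed permutation. This holds over any field because eigenspaces are one-dimensional coordinate axes, and an orthogonal map permuting a fixed orthonormal eigenbasis (up to sign) is exactly a signed permutation; the distinct-eigenvalue hypothesis is precisely what rigidifies the eigenspace decomposition and is why localizing at the discriminant is the correct move. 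Once condition (ii) is secured over $\bar k$, both properties of Definition \ref{def:relativeSection} hold, so $S$ is a relative section, and Theorem \ref{thm:relativeSectionsAreBirational} yields the asserted isomorphism $\mathbb{R}(B\scr L_S)^{\operatorname{SO}_{3}(\mathbb{R})} \cong \mathbb{R}(S)^{W(S)}$ via restriction.
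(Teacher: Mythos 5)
Your proposal is correct and follows essentially the same route as the paper's proof: property (i) via the Spectral Theorem argument already used for Proposition \ref{prop:injectionSymStates}, and property (ii) by localizing away from the discriminant of the characteristic polynomial of $A$ and using the fact that an orthogonal matrix conjugating a diagonal matrix with distinct entries to a diagonal matrix must be a signed permutation, valid over any field containing $\mathbb{R}$. The paper's proof phrases this last point for an arbitrary field $K\supseteq\mathbb{R}$ exactly as you do for $\bar k$, so there is no substantive difference.
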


\begin{proof}
For ease of notation, let $G = \operatorname{SO}_{3}(\mathbb{R})$ and $X = B\scr L_S$. As in the proof of Proposition \ref{prop:injectionSymStates}, we have that $GS = X$, so certainly property (i) of a relative section holds (Definition \ref{def:relativeSection}). For the second property, let $X_0 \subseteq X$ be the dense open subset consisting of elements whose 2-point correlation matrix $A$ has distinct eigenvalues (this is the complement of the zero locus of the discriminant $\Delta$ of the characteristic polynomial of $A$, thus Zariski-open in $X$). Let $S_0 = S\cap X_0$ be the open subset  consisting of symmetric states whose 2-point correlation matrix $A$ is not only diagonal, but it has distinct entries (i.e., distinct eigenvalues). Now for any field $K\supseteq \mathbb{R}$, the only way for $gS_0(K) \cap S_0(K)$ to be non-empty is for $g\in G$ to be a signed permutation matrix, since the three entries of $A$ are all distinct. But then $g \in W(S)(K)$, and property (ii) is satisfied. Therefore $S$ is a relative section, and the isomorphism of function fields follows from the general statement given in Theorem \ref{thm:relativeSectionsAreBirational}.
\end{proof}

A useful refinement of Theorem \ref{thm:relativeSectionSymmetricStates} can be derived from a close inspection of its proof. Using the notation of the proof, $G = \operatorname{SO}_{3}(\mathbb{R})$ and $X = B\scr L_S$, let 
\[
S_0 = \left\{ \left( \begin{tabular}{c|c}
 1 & $v^t$ \\
 \hline
 $v$ & $A$ \\ 
 \end{tabular} \right) \in B\mathscr L_S: A \text{ is diagonal with distinct entries} \right \}.
\]
This is an affine open subvariety of $S$ consisting of the complement of the zero locus of $\Delta$, the discriminant of the characteristic polynomial of $A$. Because $\Delta$ is a polynomial $G$-invariant (hence a $W(S)$-invariant as well), we can localize the coordinate rings $\mathbb{R}[S]^{W(S)}$ and $\mathbb{R}[X]^G$ at $\Delta$ (applying Lemma \ref{lemma:localizationAndInvariants}) and obtain a restriction map
\begin{equation}
\label{eqn:delta-iso}
\mathrm{res}: \mathbb{R}[X]^G[\Delta^{-1}] \longrightarrow \mathbb{R}[S]^{W(S)}[\Delta^{-1}]
\end{equation}
of localized rings. We claim that this is an isomorphism:

\begin{thm}
\label{thm:delta-iso}
The map \eqref{eqn:delta-iso} is an isomorphism. 
\end{thm}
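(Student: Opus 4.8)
The plan is to prove that the localized restriction map \eqref{eqn:delta-iso} is an isomorphism by exhibiting it as the coordinate-ring shadow of a \emph{geometric rational quotient}, in complete analogy with the LMM case treated in Theorem \ref{thm:geometricQuotientLMM}. The key observation is that on the locus $X_0$ (where the symmetric matrix $A$ has distinct eigenvalues) the $\operatorname{SO}_3(\mathbb{R})$-action is \emph{closed}, so that by Remark \ref{rmk:closedAction} the algebraic quotient $X_0/\!/G$ is simultaneously a geometric quotient, and its coordinate ring $\mathbb{R}[X_0]^G = \mathbb{R}[X]^G[\Delta^{-1}]$ (the last equality coming from Lemma \ref{lemma:localizationAndInvariants}, valid since $G = \operatorname{SO}_3(\mathbb{R})$ is connected with no nontrivial characters and $X$ is factorial). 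Granting closedness, property (ii) of a geometric quotient identifies $\mathbb{R}[X]^G[\Delta^{-1}]$ with the functions on the quotient, and then the relative-section structure of $S_0 \subseteq X_0$ pins down that quotient as $S_0/W(S)$, giving the desired isomorphism onto $\mathbb{R}[S]^{W(S)}[\Delta^{-1}]$.

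Concretely, I would proceed as follows. First I would establish injectivity of \eqref{eqn:delta-iso}, which is immediate: it is the localization of the already-injective map $\mathrm{res}_S$ from Proposition \ref{prop:injectionSymStates}, and localization of an injection of domains is an injection. The substance is surjectivity. For this I would verify that the geometric $G(\mathbb{C}) = \operatorname{SO}_3(\mathbb{C})$-orbits on $X_0(\mathbb{C})$ are closed, by a computation parallel to the proof of Theorem \ref{thm:geometricQuotientLMM}: the orbit of a point with symmetric matrix $A$ consists of matrices $RAR^t$ (together with the transformed vector $Rv$), and since $RAR^t$ is conjugate to $A$ it is annihilated by the characteristic polynomial of $A$; conversely, a point whose symmetric matrix $A'$ has the same distinct-eigenvalue characteristic polynomial is conjugate to $A$ by an orthogonal matrix, which (after adjusting by a diagonal sign matrix as in Proposition \ref{prop:relativeSectionLMM}) may be taken in $\operatorname{SO}_3(\mathbb{C})$, placing the point in the same orbit. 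The extra wrinkle compared to the LMM case is the presence of the vector $v$: one must check that for fixed $A$ with distinct eigenvalues, the eigenvalues of $A$ cut out the orbit in the $A$-coordinate, while the $v$-coordinate simply transforms along, so the orbit is still closed (being the preimage of a closed set under a polynomial map, intersected with the algebraic condition coming from the characteristic polynomial).

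With closedness in hand, Remark \ref{rmk:closedAction} gives $X_0/G = X_0/\!/G = \Spec(\mathbb{R}[X_0]^G)$ as a genuine geometric quotient, and property (ii) of Definition of a geometric quotient yields that every $W(S)$-invariant regular function on $S_0$ extends to a $G$-invariant regular function on $X_0$, which is exactly surjectivity of $\mathrm{res}$. Alternatively, and perhaps more cleanly, I would argue directly that $S_0$ meets each closed orbit in a single $W(S)$-orbit (which follows from the relative-section property (ii) established in Theorem \ref{thm:relativeSectionSymmetricStates}, now upgraded to an equality of orbit spaces because the orbits are closed), so that $S_0/W(S) \to X_0/G$ is a bijective morphism of normal affine varieties inducing an isomorphism on function fields by Theorem \ref{thm:relativeSectionsAreBirational}; being a bijective birational morphism between normal varieties over a field of characteristic zero, it is an isomorphism, and passing to coordinate rings recovers \eqref{eqn:delta-iso}.

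I expect the main obstacle to be the closedness verification in the presence of the vector coordinate $v$. In the LMM setting the orbit closure argument used only that $MM^t$ is conjugate to $CC^t$ and that distinct eigenvalues force conjugacy by Lemma \ref{lemma:SVDcomplex}; here the symmetric matrix $A$ already diagonalizes by the Spectral Theorem over $\mathbb{C}$, so the matrix part is in fact simpler, but one must take care that constraining $A$ to its conjugacy class does not accidentally leave the $v$-coordinate free in a way that opens the orbit. The resolution is that $v$ lives on a finite-dimensional affine space acted on linearly, so the orbit is the image of the proper-on-the-relevant-locus map $R \mapsto (Rv, RAR^t)$; since the stabilizer of a distinct-eigenvalue $A$ in $\operatorname{SO}_3(\mathbb{C})$ is finite (the diagonal sign group intersected with $\operatorname{SO}_3$), the orbit map has finite fibers and closed image, confirming closedness. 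Once this point is settled the remaining steps are formal consequences of the framework already assembled in Section \ref{sec:sections}.
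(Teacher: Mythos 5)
Your proposal follows the same skeleton as the paper's proof: everything is made to hinge on the claim that the $\operatorname{SO}_3(\mathbb{C})$-orbits on $X_0(\mathbb{C})$ are closed, so that $X_0/\!/G$ is a geometric quotient (Remark \ref{rmk:closedAction}), together with complex orthogonal diagonalization of a symmetric matrix with distinct eigenvalues to see that every orbit meets $S_0$ in exactly one $W(S)$-orbit. Where you genuinely differ is the endgame. The paper argues at the level of local rings: injectivity of $\mathrm{res}$ comes from Proposition \ref{prop:injectionSymStates}, radiciality of $Z \to Y$ plus EGA I.3.5.8 forces the residue field extensions to be trivial in characteristic zero, Nakayama then gives surjectivity of each local homomorphism, and one concludes via an isomorphism of structure sheaves. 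You instead invoke Zariski's main theorem: a bijective, birational morphism onto a normal variety is an isomorphism. That route is legitimate and arguably cleaner, provided you say why $Y$ and $Z$ are normal (they are: both are rings of invariants of localizations of polynomial rings, and invariants of a normal domain form a normal domain) and why the morphism is birational (your appeal to Theorem \ref{thm:relativeSectionsAreBirational} is correct, using that function fields of geometric quotients are the fields of invariant rational functions). Note that your first suggested route --- extracting surjectivity directly from ``property (ii) of a geometric quotient'' --- is not by itself an argument, since identifying the quotient $X_0/G$ with $S_0/W(S)$ is precisely what has to be proved; it is your second route that carries the proof.

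The genuine gap is in your verification of closedness of orbits, which is the load-bearing step. You resolve the ``wrinkle'' of the $v$-coordinate by asserting that the orbit map $R \mapsto (Rv, RAR^t)$ ``has finite fibers and closed image, confirming closedness.'' Finite fibers do not imply closed image: for the scaling action of $\mathbb{G}_m$ on $\mathbb{A}^2$, the orbit map of the point $(1,1)$, namely $t \mapsto (t,t)$, is injective, yet its image (the punctured diagonal) is not closed. Nor have you given any reason for the orbit map to be proper on the relevant locus --- properness of the orbit map is essentially equivalent to what you are trying to prove. (To be fair, the paper's own wording at this point is also loose: it speaks only of conjugacy classes of the matrix $A$ and never addresses the $v$-coordinate.) The correct repair is the standard equidimensionality argument. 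Every point of $X_0(\mathbb{C})$ has finite stabilizer: the stabilizer of $(v,A)$ is contained in that of $A$, which is conjugate, via the orthogonal matrix diagonalizing $A$, to the order-four group of diagonal sign matrices of determinant one. Hence every orbit in $X_0(\mathbb{C})$ has dimension $3 = \dim G$. The coefficients of the characteristic polynomial of $A$ are polynomial $G$-invariants, hence constant on each orbit closure $\overline{O}$; since $\Delta$ is a polynomial in these coefficients, $\overline{O} \subseteq X_0(\mathbb{C})$. Finally, the boundary $\overline{O}\setminus O$ of an orbit is a union of orbits of strictly smaller dimension, and no such orbits exist in $X_0(\mathbb{C})$; so the boundary is empty and $O$ is closed. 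With this substitution, the rest of your argument goes through.
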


\begin{proof}
Let $Y = \Spec(\mathbb{R}[X]^G[\Delta^{-1}])$ and let $Z = \Spec(\mathbb{R}[S]^{W(S)}[\Delta^{-1}])$. The map \eqref{eqn:delta-iso} defines a morphism of affine varieties $f:Z \rightarrow Y$.  We first claim that this morphism is surjective. To show that, it suffices to show that the corresponding map of geometric points $f: Z(\mathbb{C}) \rightarrow Y(\mathbb{C})$ is surjective. To see this, note first that $Y$ is a geometric quotient of the $G$-action on $X[\Delta^{-1}]$. Indeed, over the complex numbers the orbits for this action are conjugacy classes of the 2-point correlation matrix $A$. Since $\Delta \neq 0$, all the eigenvalues of $A$ are distinct, so that two matrices are in the same conjugacy class if and only if their characteristic polynomials are equal. Moreover, the characteristic polynomial of $A$ equals the minimal polynomial, so that the conjugacy class of $A$ corresponds to the zero locus of the characteristic polynomial of $A$, and is thus a closed subset. Therefore, the orbits of the $G$-action are closed, and the algebraic quotient $Y$ is a geometric quotient \cite[I.1.3]{GIT}. The same is true for $Z$, since in this case the group $W(S)$ acting on the affine variety $S_0$ is finite and the orbits are therefore closed. A geometric point $y \in Y(\mathbb{C})$ thus corresponds to an $\mathrm{SO}_3(\mathbb{C})$-orbit consisting of points of the form 
\[
y = \left( \begin{tabular}{c|c}
 1 & $v^t$ \\
 \hline
 $v$ & $A$ \\ 
 \end{tabular} \right)
\]
such that the matrix $A$ is symmetric and has distinct eigenvalues. Because $A$ has distinct eigenvalues, we can diagonalize it over $\mathbb{C}$ using a matrix $Q$ whose columns are eigenvectors corresponding to distinct eigenvalues, thus orthogonal. It follows that $Q$ is an orthogonal matrix and moreover we can assume that it has determinant one, as in the proof of Proposition \ref{prop:injectionSymStates}. In this way we obtain a diagonal matrix $A_0 = QAQ^t$ and a geometric point

\[
z = \left( \begin{tabular}{c|c}
 1 & $(Qv)^t$ \\
 \hline
 $Qv$ & $A_0$ \\ 
 \end{tabular} \right)
\]
of $S_0$ whose $W(S)$-orbit is a geometric point of $Z(\mathbb{C})$ such that $f(z) = y$. So $f$ is surjective on geometric points, and thus surjective. 

Now let $y \in Y$, and let $\mathcal{O}_{Y,y}$ be the local ring of $Y$ at the point $y$. Let $z\in Z$ be such that $f(z) = y$ and consider the map of local rings $f_y: \mathcal{O}_{Y,y}\rightarrow \mathcal{O}_{Z,z}$ induced by $f$. Being a localization of the ring homomorphism $\mathbb{R}[X]^G \rightarrow \mathbb{R}[S]^{W(S)}$, which is injective by Proposition \ref{prop:injectionSymStates}, the homomorphism $f_y$ is also injective. The morphism $f: Z\rightarrow Y$ is moreover radicial (universally injective), as follows from property (iii) of $S$ being a relative section. By \cite{EGA1}, I.3.5.8 it follows that the extension of residue fields $k(y) \rightarrow k(z)$ induced by the map $f_y$ is radicial (purely inseparable). Because we are working in characteristic zero, it follows that $f_y$ induces an isomorphism $k(y) \cong k(z)$. By Nakayama's lemma, the homomorphism $f_y$ of local rings is thus surjective. We have now shown that for all $y\in Y$, the homomorphism $f_y: \mathcal{O}_{Y,y}\rightarrow \mathcal{O}_{Z,z}$ of local rings induced by $f$ is an isomorphism. Thus $f$ induces an isomorphism of sheaves $\mathcal{O}_Y\cong f_*\mathcal{O}_Z$ and the theorem follows by taking global sections. 
\end{proof}

\begin{example}
\label{example:restrictionCalc}
To illustrate Theorem \ref{thm:delta-iso}, consider again the $W(S)$-invariant \[
f\left( \begin{tabular}{c|c}
 1 & $v^t$ \\
 \hline
 $v$ & $A$ \\ 
 \end{tabular} \right) = (v_1v_2v_3)^2 
\] 
of Remark \ref{rmk:S-invariant}. Even though this invariant does not come from restriction of any polynomial $G$-invariant, Theorem \ref{thm:delta-iso} implies that there must be a rational $G$-invariant $g$, containing only powers of $\Delta$ in the denominators, such that $f$ is the restriction of $g$. Explicitly, let 
\[
g(v,A) := \sum_{i,j,k,\ell,m,n} \epsilon_{ijk} A_{j\ell} A_{km} A_{mn}v_i v_\ell v_n\, ,
\]
where $i,j,k$ are distinct. If $A$ is a diagonal matrix with diagonal entries $A_{ii} = \lambda_i$, then a straightforward calculation 
yields
\[
g(v,A)  = v_1v_2v_3 (\lambda_1 - \lambda_2) (\lambda_1 - \lambda_3) (\lambda_2 - \lambda_3)\, ,
\]
so that 
\[
r = \frac{g(v,A)^2}{\Delta} \in   \Bbb R[B\scr L_S]^G[\Delta^{-1}] \, .
\]
restricts to the $W(S)$-invariant $f$. The expression for $g$ was found via a graphical calculus analogous to that of \cite{King}. 
\end{example}

Note that in the proof of Theorem \ref{thm:delta-iso}, we have used the following construction of a geometric rational quotient for the action of $\operatorname{SO}_{3}(\mathbb{R})$ on symmetric states, which we record below:  

\begin{thm}
\label{thm:geometricQuotientSym}
Let $X = B\scr L_S$, $G = \operatorname{SO}_{3}(\mathbb{R})$ and let 
\[
X_0 = \left\{ \left( \begin{tabular}{c|c}
 1 & $v^t$ \\
 \hline
 v & $A$ \\ 
 \end{tabular} \right)  : A \text{ has distinct eigenvalues} \right\} \subseteq X. 
\] 
Let $\Delta$ be the discriminant of the characteristic polynomial of $A$. Then the algebraic quotient \[X_0/\!/ G = \Spec( \mathbb{R}[X_0]^G ) = \Spec( \mathbb{R}[X]^G[\Delta^{-1}])\]
is a geometric  quotient of $X_0$, and therefore a geometric rational quotient for the action of $G$ on $X$. 
\end{thm}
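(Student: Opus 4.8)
The plan is to deduce the statement from the closed-orbit criterion of Remark \ref{rmk:closedAction} together with the localization result of Lemma \ref{lemma:localizationAndInvariants}. First I would justify the ring-theoretic identity $\mathbb{R}[X_0]^G = \mathbb{R}[X]^G[\Delta^{-1}]$ that is implicit in the statement. Since $X = B\scr L_S \cong \mathbb{A}^9_{\mathbb{R}}$ is factorial, $G = \operatorname{SO}_3(\mathbb{R})$ is connected with no non-trivial characters, and $\Delta \in \mathbb{R}[X]^G$ is a non-zero invariant, Lemma \ref{lemma:localizationAndInvariants} applied to the multiplicative set $\{\Delta^n\}$ (which lies in $\mathbb{R}[X]^G$, so $S^G = S$) gives $\mathbb{R}[X]^G[\Delta^{-1}] = (\mathbb{R}[X][\Delta^{-1}])^G = \mathbb{R}[X_0]^G$, exactly as in Remark \ref{rmk:localizationAndInvariants} for the LMM case. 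This identifies $X_0/\!/G = \Spec(\mathbb{R}[X_0]^G)$ with $\Spec(\mathbb{R}[X]^G[\Delta^{-1}])$.

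It then remains to show that $X_0/\!/G$ is a geometric quotient. By Remark \ref{rmk:closedAction}, since $X_0$ is affine (a principal open $\{\Delta \neq 0\}$ inside affine space) and $G$ is reductive, it suffices to verify that the action of $G(\mathbb{C}) = \operatorname{SO}_3(\mathbb{C})$ on $X_0(\mathbb{C})$ is closed, i.e.\ that every geometric orbit is Zariski-closed in $X_0(\mathbb{C})$. Once this is known, all three notions of quotient coincide, $X_0/G = X_0/\!/G$, and we are done.

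The main step, and the one I expect to be the crux, is orbit-closedness. I would argue by dimension. For $x = (v,A) \in X_0(\mathbb{C})$ the stabilizer $G_x$ is contained in the conjugation stabilizer $G_A = \{R \in \operatorname{SO}_3(\mathbb{C}) : RAR^t = A\}$. Since $A$ is a complex symmetric matrix with distinct eigenvalues, its eigenvectors for distinct eigenvalues are orthogonal and non-isotropic, so $A$ is orthogonally diagonalizable over $\mathbb{C}$ (the symmetric-matrix analogue of Lemma \ref{lemma:SVDcomplex}); in an orthonormal eigenbasis $G_A$ consists of the diagonal $\pm 1$ matrices of determinant $1$, hence is finite. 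Thus $\dim(G\cdot x) = \dim G = 3$ for every $x \in X_0(\mathbb{C})$. If some orbit $G\cdot x$ were not closed in $X_0(\mathbb{C})$, then the boundary $\overline{G\cdot x}\setminus(G\cdot x)$ would meet $X_0(\mathbb{C})$ in a $G$-orbit of strictly smaller dimension, contradicting the fact that all orbits in $X_0(\mathbb{C})$ are $3$-dimensional. Hence every orbit is closed. Alternatively one may mirror the proof of Theorem \ref{thm:geometricQuotientLMM} and Theorem \ref{thm:delta-iso}: because $A$ has distinct eigenvalues its characteristic polynomial equals its minimal polynomial, so the orbit of $x$ is cut out inside $X_0(\mathbb{C})$ by fixing this polynomial, a closed condition, the orbit being recovered via the complex orthogonal normal form.

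Finally, closedness of the action gives, through Remark \ref{rmk:closedAction}, that $X_0/\!/G = X_0/G$ is a geometric quotient. Since $X_0 \subseteq X$ is a dense open $G$-invariant subset (the complement of the hypersurface $\{\Delta = 0\}$, and $\Delta$ is $G$-invariant), this geometric quotient is by definition a geometric rational quotient of $X$ by $G$ in the sense of Definition \ref{def:geometricRationalQuotient}, completing the proof. The only genuine subtlety is the orbit-closedness input; everything else is a formal consequence of the results already established in Section \ref{sec:sections}.
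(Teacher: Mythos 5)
Your proof is correct, and its crux is genuinely different from the paper's. The paper records Theorem~\ref{thm:geometricQuotientSym} as a byproduct of the proof of Theorem~\ref{thm:delta-iso}, where orbit-closedness is argued by identifying orbits with level sets of the characteristic polynomial of $A$ (using that $\Delta\neq 0$ forces the characteristic and minimal polynomials to coincide), exactly mirroring the LMM case of Theorem~\ref{thm:geometricQuotientLMM}. As literally written, that argument determines the orbit by the conjugacy class of $A$ alone and ignores the vector $v$: the locus where $A$ has a fixed characteristic polynomial is a \emph{union} of $G$-orbits, not a single orbit, so the paper's identification needs an extra step (and your sketched ``alternative'' at the end of your proposal inherits this same gap, so I would drop it). Your main argument avoids the issue entirely: you show the stabilizer of any $(v,A)\in X_0(\mathbb{C})$ is finite, because a complex symmetric matrix with distinct eigenvalues is orthogonally diagonalizable (eigenvectors for distinct eigenvalues are orthogonal and necessarily non-isotropic, by nondegeneracy of the bilinear form), whence every orbit in $X_0(\mathbb{C})$ has dimension $3=\dim G$; since the boundary $\overline{G\cdot x}\setminus G\cdot x$ of an orbit is a union of orbits of strictly smaller dimension, no orbit in $X_0(\mathbb{C})$ can lie in the closure of another, so the action on $X_0$ is closed. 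Combined with Lemma~\ref{lemma:localizationAndInvariants} (applicable since $\mathbb{R}[X]$ is a polynomial ring and $\operatorname{SO}_3$ is connected, semisimple, hence character-free) and Remark~\ref{rmk:closedAction}, this yields the theorem. What each approach buys: the paper's normal-form argument gives an explicit description of the orbits (which is then reused for the surjectivity step in Theorem~\ref{thm:delta-iso}), while your dimension-plus-finite-stabilizer argument is more robust, requires no normal form for the pair $(v,A)$, and in fact repairs the imprecision in the paper's own justification of orbit-closedness.
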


Finally, we can now prove that the field of rational invariants is purely transcendental:

\begin{thm}
\label{thm:rationalitySym}
The field of rational invariants $\mathbb{R}(B\scr L_S)^{\operatorname{SO}_{3}(\mathbb{R})}$ is purely transcendental of degree six. 
\end{thm}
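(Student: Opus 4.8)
The plan is to leverage the relative section already constructed, which converts the non-reductive-looking obstruction of a positive-dimensional generic stabilizer into a problem about a \emph{finite} group acting almost freely, where the No-Name Lemma applies. By Theorem \ref{thm:relativeSectionSymmetricStates} there is an isomorphism $\mathbb{R}(B\scr L_S)^{\operatorname{SO}_3(\mathbb{R})} \cong \mathbb{R}(S)^{W(S)}$, and $S$ is the six-dimensional space of pairs $(v,A)$ with $A=\operatorname{diag}(\lambda_1,\lambda_2,\lambda_3)$, while $W(S)=\mathrm{O}$ is the chiral octahedral group of signed permutation matrices of determinant $1$. Writing $S = V_1\oplus V_2$ with $V_1=\mathbb{R}^3_v$ (the $v$-coordinate) and $V_2=\mathbb{R}^3_\lambda$ (the diagonal entries), both summands are $\mathrm{O}$-invariant. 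The idea is to apply Corollary \ref{cor:rationality} to this decomposition: an element $EP\in\mathrm{O}$ acts on $v$ by the signed permutation $v\mapsto EPv$ and on $\lambda$ by the underlying permutation $\pi$ (the sign factor $E$ cancels on the diagonal of $RAR^t$). Thus the whole problem reduces to verifying the hypotheses of the No-Name Lemma for $V_1$.

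First I would check that $\mathrm{O}$ acts \emph{almost freely} on $V_1=\mathbb{R}^3_v$. For a geometric point $v$ with all coordinates nonzero and all squares distinct, a signed permutation $EP$ fixing $v$ forces $\pi$ to be trivial (distinct $|v_i|$) and then $E$ to be trivial (nonzero $v_i$); hence the generic geometric stabilizer is trivial, so the action is almost free over $\overline{\mathbb{R}}=\mathbb{C}$ as well. Factoriality of $\overline{V}_1=\mathbb{A}^3_{\mathbb{C}}$ is automatic, as in Corollary \ref{cor:rationality}. It then remains only to prove that $\mathbb{R}(V_1)^{\mathrm{O}}=\mathbb{R}(v_1,v_2,v_3)^{\mathrm{O}}$ is purely transcendental of degree $3$; granting this, Corollary \ref{cor:rationality} yields that $\mathbb{R}(S)^{\mathrm{O}}$ is purely transcendental of transcendence degree $\dim V_2 = 3$ over $\mathbb{R}(V_1)^{\mathrm{O}}$, for a total of $3+3=6$, which is the assertion.

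The crux, and the main obstacle, is the rationality of this octahedral quotient $\mathbb{R}(\mathbb{R}^3_v)^{\mathrm{O}}$, since $\mathbb{R}^3_v$ is an irreducible $\mathrm{O}$-representation and so cannot itself be split. I would handle it by a second, nested application of the No-Name Lemma, using the extension $1\to H\to \mathrm{O}\to S_3\to 1$, where $H\cong(\mathbb{Z}/2)^2$ is the Klein four-group of diagonal sign matrices of determinant $1$ and $S_3$ is the permutation quotient. Taking $H$-invariants first, one computes $\mathbb{R}(\mathbb{R}^3_v)^{H}=\mathbb{R}(t_1,t_2,t_3)$, where $t_i := v_1v_2v_3/v_i^2$; indeed $t_1t_2t_3=v_1v_2v_3$ and $v_i^2 = t_1t_2t_3/t_i$, so the $t_i$ are algebraically independent generators. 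A short computation with a transposition and a $3$-cycle shows that the residual $S_3=\mathrm{O}/H$ action is the twisted signed permutation $t_i\mapsto \operatorname{sgn}(\pi)\,t_{\pi(i)}$. This representation decomposes as the sum of the sign representation (spanned by $(1,1,1)$) and the standard $2$-dimensional representation (the sum-zero plane, using $\mathrm{std}\otimes\mathrm{sign}\cong\mathrm{std}$ for $S_3$). On the standard representation $S_3$ acts as a reflection group, so by Chevalley--Shephard--Todd (cf.~\cite[Theorem 3.9.4]{CIT}) its invariant ring is polynomial and the action is almost free; applying Theorem \ref{thm:noName} one more time with the one-dimensional sign representation as the added factor shows $\mathbb{R}(t_1,t_2,t_3)^{S_3}$ is purely transcendental of degree $2+1=3$.

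Assembling the two applications of the No-Name Lemma then gives that $\mathbb{R}(\mathbb{R}^3_v)^{\mathrm{O}}=\mathbb{R}(t_1,t_2,t_3)^{S_3}$ is rational of degree $3$, and hence $\mathbb{R}(B\scr L_S)^{\operatorname{SO}_3(\mathbb{R})}\cong\mathbb{R}(S)^{\mathrm{O}}$ is purely transcendental of degree $6$. I expect the only genuinely delicate point to be the identification of the twisted $S_3$-action on the $t_i$ and the recognition of its standard summand as a reflection representation; the remaining verifications (almost-freeness on $\mathbb{R}^3_v$ and the bookkeeping of transcendence degrees) are routine.
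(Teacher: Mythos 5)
Your proposal is correct, and its skeleton coincides with the paper's: reduce via the relative section of Theorem \ref{thm:relativeSectionSymmetricStates} to the finite Weyl group $W(S)=\mathrm{O}$, split $S=V_1\oplus V_2$ into the $v$-part and the diagonal part, and apply Corollary \ref{cor:rationality}. Where you genuinely diverge is at what you correctly identify as the crux: the rationality of $\mathbb{R}(V_1)^{\mathrm{O}}$. The paper proves this (Lemma \ref{lem:rationalityOctahedral}) by passing to $\mathbb{C}$, observing that $\mathbb{C}(V_1)^{\mathrm{O}}$ is purely transcendental over the unirational surface field $\mathbb{C}(\mathbb{P}V_1)^{\mathrm{O}}$, invoking Castelnuovo's theorem, and then exhibiting explicit generators $X=p_2/p_1^2$, $Y=p_3/p_1^3$, $Z=p_4/p_1^4$ from the relation $p_4^2=P_9(p_1,p_2,p_3)$; the $\mathbb{Q}$-rationality of these generators is what allows descent back to $\mathbb{R}$, since Castelnuovo is unavailable over non-closed fields. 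Your route instead stays elementary and works over $\mathbb{R}$ throughout: you first quotient by the normal Klein subgroup $H\cong(\mathbb{Z}/2)^2$ of even sign changes via the monomial generators $t_i=v_1v_2v_3/v_i^2$ (this is correct: $v_i^2=t_jt_k$ and $v_1v_2v_3=t_1t_2t_3$ recover the generators $v_1^2,v_2^2,v_3^2,v_1v_2v_3$ of the $H$-invariant ring, whose fraction field is the invariant field because $H$ is finite, cf.~Remark \ref{remark:fractionFields}), then identify the residual $S_3$-action as the twisted permutation action, isomorphic to $\mathrm{sgn}\oplus\mathrm{std}$, and finish with Chevalley--Shephard--Todd on the standard summand plus one more application of Theorem \ref{thm:noName}. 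Each approach buys something: the paper's yields the explicit invariants $X,Y,Z$ that feed directly into the effective statement (the lifts $p_X,p_Y,p_Z$ of Theorem \ref{thm:effectiveRationalitySymmetricStates}), whereas yours avoids Castelnuovo and the descent step entirely, at the cost of making explicit generators harder to extract (they must be unwound through the monomial substitution and two No-Name applications). A small point in your favor: your almost-freeness locus (all $v_i$ nonzero with distinct squares) is the correct one, while the locus $\prod_{i<j}(v_i-v_j)\neq 0$ claimed in the paper's proof is slightly too large --- for instance $(1,-1,0)$ lies in it but is fixed by $EP\in\mathrm{O}$ with $P$ the transposition of the first two coordinates and $E=-I$; this does not affect the paper's conclusion, since almost-freeness only requires some dense open set with trivial stabilizers.
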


\begin{proof}
We use the version of the No-Name Lemma given in Corollary \ref{cor:rationality}, applied to the relative section $S$ of Theorem \ref{thm:relativeSectionSymmetricStates}. 
In what follows, set $V := S$; then $\dim_{\mathbb{R}} V = 6$. Let $H := W(S) = \mathrm{O}$ be the chiral octahedral group of signed $3\times 3$ permutation matrices. 
We then have a decomposition $V = V_1\oplus V_2$, where 
\[
V_1 = \left\{ \left( \begin{tabular}{c|ccc} 
 1 & $v_1$ & $v_2$ & $v_3$ \\
 \hline
 $v_1$ & 0 & 0 & 0  \\ 
 $v_2$ & 0 & 0 & 0  \\ 
 $v_3$ & 0 & 0 & 0  \\ 
 \end{tabular} \right)\right\},  \quad  
 V_2 = \left\{ \left( \begin{tabular}{c|ccc} 
 1 & 0 & 0 & 0 \\
 \hline
0 & $a_1$ & 0 & 0  \\ 
 0 & 0 & $a_2$ & 0  \\ 
 0 & 0 & 0 & $a_3$  \\ 
 \end{tabular} \right)\right\}
\] 
are the $H$-invariant subspaces of $S$ corresponding to the subspaces of 1-point and 2-point correlation functions. The action of $H$ on $V_1$ is the standard action and it is almost free, since over geometric points it is free on the open subset $U$ given by 
\[
\prod_{i<j} (v_i - v_j) \ne 0 \, .
\]
It can be easily shown that the field of rational invariants $\mathbb{R}(V_1)^H$ is purely transcendental of degree three (see Lemma \ref{lem:rationalityOctahedral} below for a proof). It now follows from Corollary \ref{cor:rationality} that $\mathbb{R}(V)^H$ itself is purely transcendental of degree six, and therefore the same is true for $\mathbb{R}(B\scr L_S)^{\operatorname{SO}_{3}(\mathbb{R})}$, by Theorem \ref{thm:relativeSectionsAreBirational}.
\end{proof}

\begin{remark}
As in immediate Corollary, we deduce the fact that the geometric rational quotient of Theorem \ref{thm:geometricQuotientSym} is  birationally equivalent to $\mathbb{P}_{\mathbb{R}}^6$. 
\end{remark}

\subsection{Effective rationality}
\label{subsec:effective}
 By Thm. \ref{thm:rationalitySym}, the field of invariants of symmetrically mixed states of two qubits is generated by six algebraically independent rational invariants. We now explain how to make this result effective, that is, we derive explicit expressions for six generating invariants in terms of a choice of coordinates for Liouville space.

Let 
$$
G = \operatorname{O} := \{ g \in \mathrm{GL}(3,\mathbb{C}), ge_i = \pm e_j, \det(g) = 1 \}
$$
be the chiral octahedral group, introduced in Section \ref{section:LMMstates}. 
Let $V = \mathbb{C}^3$, with the standard action of $G$. The ring of polynomial invariants $\mathbb{C}[V]^G$ has a simple presentation as follows. Write $v = (v_1,v_2,v_3)^t \in V$ and consider the polynomial
$$
p(x) = \prod_{i=1}^3 (x - v_i^2) = x^3 - p_1x^2 + p_2x - p_3, 
$$ 
so that 
\begin{align*}
p_1(v) &= v_1^2 + v_2^2 + v_3^2 \\
p_2(v) &= (v_1v_2)^2 + (v_1v_3)^2 + (v_2v_3)^2 \\
p_3(v) &= (v_1v_2v_3)^2
\end{align*}
are the elementary symmetric polynomials in the squares of the coordinates of $v$. Let 
$$
p_4(v) = v_1v_2v_3\,\mathrm{disc}(p) = v_1v_2v_3(v_1^2 - v_2^2)(v_1^2 - v_3^2)(v_2^2 - v_3^2)
$$
Then $p_1, p_2, p_3, p_4$ generate $\mathbb{C}[V]^G$, as can be checked algorithmically using Macaulay2. The invariants $p_1, p_2, p_3$ form a set of primary invariants for $\mathbb{C}[V]^G$, and $p_4$ is a secondary invariants satisfying a relation of the form 
\begin{equation}
\label{eqn:relation}
p_4^2 = P_9(p_1, p_2, p_3)
\end{equation}
where $P_9$ is a quasi-homogeneous polynomial with rational coefficients, total degree 9 and weights $1,2,3$, respectively.

\begin{lem}
\label{lem:rationalityOctahedral}
The field of rational invariants $\mathbb{C}(V)^G$ is purely transcendental of degree three, generated by the three rational invariants 
$$
X = p_2/p^2_1, \quad Y = p_3/p_1^3,\quad  Z = p_4/p_1^4.
$$
\end{lem}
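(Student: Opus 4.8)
The plan is to reduce the computation of $\mathbb{C}(V)^G$ to the fraction field of the already-known ring of polynomial invariants, and then exploit the quasi-homogeneity of the relation \eqref{eqn:relation} to solve explicitly for the lowest-degree generator $p_1$ as a rational function of $X,Y,Z$. First, since $G = \operatorname{O}$ is finite, every $G$-invariant rational function is a quotient of $G$-invariant polynomials; that is, $\mathbb{C}(V)^G = \operatorname{Frac}(\mathbb{C}[V]^G)$ (Remark \ref{remark:fractionFields}). By the presentation recalled above, $\mathbb{C}[V]^G = \mathbb{C}[p_1,p_2,p_3,p_4]$, so it suffices to show each $p_i$ lies in $\mathbb{C}(X,Y,Z)$; the reverse inclusion $\mathbb{C}(X,Y,Z)\subseteq\mathbb{C}(V)^G$ is immediate since $X,Y,Z$ are ratios of invariants.

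The crucial step is to recover $p_1$. Because $p_1,p_2,p_3$ carry polynomial degrees $2,4,6$ and $p_4$ degree $9$, the relation $p_4^2 = P_9(p_1,p_2,p_3)$ is quasi-homogeneous: every monomial $p_1^a p_2^b p_3^c$ occurring in $P_9$ satisfies $a + 2b + 3c = 9$. Substituting $p_2 = Xp_1^2$ and $p_3 = Yp_1^3$ turns each such monomial into $X^b Y^c p_1^{\,a+2b+3c} = X^b Y^c p_1^9$, so $P_9(p_1,p_2,p_3) = p_1^9\,\tilde P(X,Y)$ for a polynomial $\tilde P\in\mathbb{C}[X,Y]$. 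Dividing the relation by $p_1^8$ and using $Z = p_4/p_1^4$ yields $Z^2 = p_1\,\tilde P(X,Y)$, hence
\[
p_1 = \frac{Z^2}{\tilde P(X,Y)} \in \mathbb{C}(X,Y,Z).
\]
Once $p_1$ is expressed this way, $p_2 = Xp_1^2$, $p_3 = Yp_1^3$, and $p_4 = Zp_1^4$ all lie in $\mathbb{C}(X,Y,Z)$, so $\mathbb{C}[V]^G\subseteq\mathbb{C}(X,Y,Z)$ and therefore $\mathbb{C}(V)^G = \mathbb{C}(X,Y,Z)$.

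Algebraic independence of the three generators then comes for free: since $G$ is finite, $\mathbb{C}(V)$ is algebraic over $\mathbb{C}(V)^G$, so $\mathbb{C}(V)^G$ has transcendence degree $\dim V = 3$ over $\mathbb{C}$; a field of transcendence degree $3$ generated by three elements forces those elements to be algebraically independent. This gives that $\mathbb{C}(V)^G$ is purely transcendental of degree three, generated by $X,Y,Z$.

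The only point requiring genuine care is verifying $\tilde P(X,Y)\neq 0$, so that the displayed formula for $p_1$ is legitimate. This holds because the assignment sending a monomial $p_1^a p_2^b p_3^c$ (with $a = 9-2b-3c$ determined by $b,c$) to $X^b Y^c$ is injective on the monomials appearing in $P_9$, so $\tilde P$ has exactly as many nonzero terms as $P_9$; and $P_9\neq 0$ since $p_4\neq 0$ and $p_1,p_2,p_3$ are algebraically independent. I expect this bookkeeping, together with pinning down the precise quasi-homogeneous form of the relation \eqref{eqn:relation}, to be the only delicate ingredient; everything else in the argument is formal.
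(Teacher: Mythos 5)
Your proposal is correct, and its engine is exactly the paper's: both proofs use the quasi-homogeneity of the relation \eqref{eqn:relation} to write $P_9(p_1,p_2,p_3)=p_1^9\,P_9(1,X,Y)$, solve $p_1 = Z^2/P_9(1,X,Y)$, and conclude $\mathbb{C}[V]^G\subseteq\mathbb{C}(X,Y,Z)$, hence $\mathbb{C}(V)^G=\mathrm{Frac}(\mathbb{C}[V]^G)=\mathbb{C}(X,Y,Z)$. The one structural difference is that the paper opens by invoking Castelnuovo's theorem (via the unirational degree-$2$ field $\mathbb{C}(\mathbb{P}V)^G$) to get pure transcendence abstractly, whereas you obtain algebraic independence of $X,Y,Z$ by the elementary count: $\mathbb{C}(V)$ is algebraic over $\mathbb{C}(V)^G$ because $G$ is finite, so $\mathbb{C}(V)^G$ has transcendence degree $3$, and three generators of such a field must be independent. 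Your route is more self-contained and shows Castelnuovo is dispensable here; this is a genuine (if modest) simplification.

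One imprecision to fix in your last paragraph: your monomial-counting argument establishes that $\tilde P$ is nonzero as an \emph{abstract} polynomial in two variables, but what the division requires is that $\tilde P(X,Y)$ is nonzero as a \emph{rational function on $V$}, i.e., after substituting $X=p_2/p_1^2$, $Y=p_3/p_1^3$. These two statements coincide only once one knows $X,Y$ are algebraically independent as rational functions, which you have not established at that point in the argument. The repair is one line and makes the bookkeeping unnecessary: the identity $p_1^9\,\tilde P(X,Y)=P_9(p_1,p_2,p_3)=p_4^2$ holds identically on $V$, and $p_4\neq 0$, so $\tilde P(X,Y)\neq 0$ as a rational function and the formula $p_1=Z^2/\tilde P(X,Y)$ is legitimate. (The paper performs the same division without comment, so on this point you are, if anything, more careful--just aim the care at the right statement.)
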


\begin{proof}
$\mathbb{C}(V)^G$ is a purely transcendental extension of $\mathbb{C}(\mathbb{P}V)^G \subseteq \mathbb{C}(\mathbb{P}V)$, which is a unirational field of transcendence degree 2, thus purely transcendental by Castelnuovo's Theorem. To find generators for $\mathbb{C}(V)^G$, let $X,Y,Z$ as in the statement of the Lemma. Then by \eqref{eqn:relation}, we have 
$$
\frac{p_4^2}{p_1^9} = P_9(1,X,Y),
$$
a polynomial in the rational invariants $X,Y \in \mathbb{C}(V)^G$. Note that $p_1 = Z^2/P_9(1,X,Y)$ belongs to $\mathbb{C}(X,Y,Z)$, so that every invariant $p_1, p_2, p_3, p_4$ can be expressed as a rational function in $X,Y,Z$. Therefore $\mathbb{C}[V]^G \subseteq \mathbb{C}(X,Y,Z)$, so that 
$$
\mathrm{Frac}(\mathbb{C}[V]^G) = \mathbb{C}(V)^G \subseteq \mathbb{C}(X,Y,Z)
$$
which shows $\mathbb{C}(X,Y,Z) = \mathbb{C}(V)^G$. 
\end{proof}

\begin{remark}
Since $X,Y,Z$ have $\mathbb{Q}$-coefficients, we also have 
$$
\mathbb{R}(V)^G = \mathbb{R}(X,Y,Z)
$$
\end{remark}

Let now $S$ be the relative section for symmetric states given above, where the matrix $A$ of 2-point correlation functions is diagonal. The Weyl group $W(S)$ is the chiral octahedral group $G$, acting via the standard representation $V$ on the subspace of 1-point correlation functions.  Recall that restriction gives an isomorphism 
$$
\mathbb{R}(S)^{W(S)} \simeq \mathbb{R}(B\mathcal{L}_{\mathrm{sym}})^{\mathrm{SO}_3(\mathbb{R})}
$$
at the level of rational invariants. In particular, the three invariants $X,Y,Z$ generating the field of rational invariants of the octahedral group lift to three rational invariants 
$$
p_X, p_Y, p_Z \in \mathbb{R}(B\mathscr{L}_{\mathrm{sym}})^{\mathrm{SO}_3(\mathbb{R})}.
$$
These invariants can be calculated explicitly as in Example \ref{example:restrictionCalc}. They are all rational invariants only requiring powers of the discriminant $\Delta$ of $A$ in the denominator. This set of invariants can then be extended to a set of generating invariants, as follows:

\begin{thm}
\label{thm:effectiveRationalitySymmetricStates}
The six invariants 
$$
p_X, p_Y, p_Z, \tr(A), \tr(A^2), \det(A)
$$
generate the field $\mathbb{R}(B\mathscr{L}_{\mathrm{sym}})^{\mathrm{SO}_3(\mathbb{R})}$.
\end{thm}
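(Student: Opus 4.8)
The plan is to push the problem down to the relative section $S$ of Theorem~\ref{thm:relativeSectionSymmetricStates}, where it turns into an explicit statement about chiral octahedral invariants. Since that theorem supplies a field isomorphism $\mathbb{R}(B\scr L_S)^{\operatorname{SO}_3(\mathbb{R})}\cong \mathbb{R}(S)^{W(S)}$ given by restriction, it is equivalent to prove that the restrictions of the six listed invariants generate $\mathbb{R}(S)^{W(S)}$. On $S$ I would coordinatize by the diagonal entries $\lambda_1,\lambda_2,\lambda_3$ of $A$ together with the three $1$-point coordinates $v_1,v_2,v_3$; here $W(S)=\mathrm{O}$ acts on the $v_i$ by signed permutations and on the $\lambda_i$ by the underlying unsigned permutations. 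The first step is to record the restrictions: by the construction in Example~\ref{example:restrictionCalc}, $p_X,p_Y,p_Z$ restrict to the octahedral invariants $X,Y,Z$ of Lemma~\ref{lem:rationalityOctahedral} (functions of $v$ alone), while $\tr(A),\tr(A^2),\det(A)$ restrict to the power sums and the product of the $\lambda_i$.

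Next I would exploit the decomposition behind Theorem~\ref{thm:rationalitySym}. Writing $S=V_1\oplus V_2$ with $V_1$ the $1$-point and $V_2$ the $2$-point summand, Corollary~\ref{cor:rationality} realizes $\mathbb{R}(S)^{W(S)}$ as a purely transcendental extension of $\mathbb{R}(V_1)^{\mathrm{O}}=\mathbb{R}(X,Y,Z)$ of transcendence degree $\dim V_2=3$. As $p_X,p_Y,p_Z$ already generate this base field, the theorem reduces to the claim that $\tr(A),\tr(A^2),\det(A)$ generate $\mathbb{R}(S)^{W(S)}$ over $\mathbb{R}(X,Y,Z)$. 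I would try to settle this by a degree count: put $F=\mathbb{R}(X,Y,Z,\tr(A),\tr(A^2),\det(A))$ and compare $[\mathbb{R}(v,\lambda):F]$ with $|\mathrm{O}|=24$, using that $\mathbb{R}(v,\lambda)/\mathbb{R}(v,\lambda)^{\mathrm{O}}$ is Galois of degree $24$. Running the tower $F\subseteq F(\lambda_1,\lambda_2,\lambda_3)\subseteq \mathbb{R}(v,\lambda)$ reduces this to the degree of the splitting field of the characteristic polynomial of $A$ and to $[\mathbb{R}(v):\mathbb{R}(X,Y,Z)]$, both of which are standard.

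The main obstacle is exactly this final reduction, and it is subtler than it looks. It is immediate that the six invariants are algebraically independent and of the correct transcendence degree six; but algebraic independence together with matching transcendence degree does \emph{not} force them to generate $\mathbb{R}(S)^{W(S)}$, since a proper subfield of finite index can have the same transcendence degree. Concretely, $X,Y,Z$ pin down the $1$-point data only up to octahedral symmetry and $\tr(A),\tr(A^2),\det(A)$ pin down the eigenvalues of $A$ only up to permutation, so the decisive question is whether the \emph{correlation} between the two — for instance the mixed invariant $v^tAv=\sum_i v_i^2\lambda_i$, which is visibly $W(S)$-invariant — already lies in $F$. I therefore expect the crux of the proof to be the verification, through the degree count above, that no such correlation invariant is lost. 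Should the count instead expose a gap, the natural fix is to replace some of $\tr(A),\tr(A^2),\det(A)$ by genuinely mixed $\operatorname{SO}_3(\mathbb{R})$-invariants, such as the moments $v^tAv,\,v^tA^2v$ together with the pseudoscalar $\det(v,\,Av,\,A^2v)$ (whose square is expressible in the remaining invariants), and to extract six algebraically independent generators from this enlarged list.
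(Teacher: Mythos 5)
Your closing paragraph identifies the real issue, and your suspicion is correct: carrying out the degree count you outline does expose a gap. In fact it shows that the six listed invariants generate a \emph{proper} subfield, so the statement as printed is false, and the paper's own proof (an orbit-separation argument) fails at exactly the step you flagged. Coordinatize the section $S$ of Theorem \ref{thm:relativeSectionSymmetricStates} by $(v,\lambda)$ with $A=\mathrm{diag}(\lambda_1,\lambda_2,\lambda_3)$; the Weyl group $W(S)=\mathrm{O}$ acts on $v$ by signed permutations and on $\lambda$ only through its quotient $\mathrm{O}\twoheadrightarrow\Sigma_3$ onto the symmetric group on three letters. The six invariants restrict on $S$ to $X,Y,Z$, which depend on $v$ alone, and to symmetric functions of $\lambda$ alone, which generate $\mathbb{R}(e_1,e_2,e_3)$. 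Hence the field $F$ generated by the restrictions inside $L:=\mathbb{R}(v,\lambda)$ is fixed pointwise by the \emph{larger} group $\mathrm{O}\times\Sigma_3$, acting with $\mathrm{O}$ on $v$ only and $\Sigma_3$ on $\lambda$ only. Quantitatively, $[\mathbb{R}(v):\mathbb{R}(X,Y,Z)]=\lvert\mathrm{O}\rvert=24$ (Artin's theorem together with the Remark following Lemma \ref{lem:rationalityOctahedral}), $[\mathbb{R}(\lambda):\mathbb{R}(e_1,e_2,e_3)]=6$, and both degrees persist after adjoining the complementary, algebraically independent variables, so
\[
[L:F]=[L:\mathbb{R}(X,Y,Z,\lambda)]\cdot[\mathbb{R}(X,Y,Z,\lambda):F]=24\cdot 6=144>24=\lvert W(S)\rvert=[L:\mathbb{R}(S)^{W(S)}].
\]
Thus $[\mathbb{R}(S)^{W(S)}:F]=6$, and your ``correlation'' invariant is exactly what is lost: $v^tAv$ restricts to $\sum_i\lambda_iv_i^2$, which is $W(S)$-invariant but is moved by the $F$-fixing automorphism of $L$ that transposes $\lambda_1,\lambda_2$ and fixes $v$, so it does not lie in $F$. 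In orbit language (the language of the paper's proof): for generic $(v,\lambda)$ the states $(v,\mathrm{diag}(\lambda_1,\lambda_2,\lambda_3))$ and $(v,\mathrm{diag}(\lambda_2,\lambda_1,\lambda_3))$ take equal values on all six invariants, yet lie in different $\mathrm{SO}_3(\mathbb{C})$-orbits, because any $g$ conjugating one diagonal matrix to the other is a signed permutation inducing the transposition $(1\,2)$, and no such $g$ fixes a generic $v$. This is precisely where the paper errs: after normalizing $v=v'$ and both matrices diagonal, it concludes that the eigenvalue lists agree up to a permutation and then asserts the states are in the same orbit---but the permutation aligning the eigenvalues moves $v$. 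The generic fiber of the six invariants is a union of six orbits, not one.

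Your proposed repair is also the right one, and it can be made precise. Adjoining the single mixed invariant $v^tAv$ suffices: since $F=L^{\mathrm{O}\times\Sigma_3}$ by the count above, the conjugates of $v^tAv$ over $F$ are the six generically distinct functions $\sum_i\lambda_{\pi(i)}v_i^2$, $\pi\in\Sigma_3$, so $[F(v^tAv):F]=6$ and therefore $\mathbb{R}(S)^{W(S)}=F(v^tAv)$. Consequently the \emph{seven} invariants $p_X,p_Y,p_Z,\tr(A),\tr(A^2),\det(A),v^tAv$ do generate $\mathbb{R}(B\scr L_S)^{\operatorname{SO}_{3}(\mathbb{R})}$. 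Paring this down to six is where your own caveat bites hard: algebraic independence plus correct transcendence degree proves nothing, and plausible candidates still fail. For instance $p_X,p_Y,p_Z,\tr(A),\tr(A^2),v^tAv$ has generic fibers consisting of two orbits (for fixed $v$, the eigenvalues are cut out by two planes and a sphere, i.e.\ a line meeting a sphere in two points), so it generates an index-two subfield. Any proposed list of six must therefore be certified by the orbit-separation criterion itself---which is exactly the verification missing from the paper.
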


\begin{proof}
It suffices to show that the six invariants separate geometric orbits in general position. This follows from \cite[Lemma 2.1]{PV} for the case $k= \mathbb{C}$, but the argument can easily be extended to an arbitrary field of characteristic zero by invoking \cite{EGA1}, I.3.5.8.  Let $U = X_0$ be the open subset of $B\mathscr{L}_{\mathrm{sym}}(\mathbb{C})$ where $\Delta \neq 0$, and let $s = (v,A)$, $s' = (v', A')$ be symmetric states such that all the six invariants above are equal when evaluated on $s,s'$. We need to show that these states are in the same $\mathrm{SO}_3(\mathbb{R})$-orbit. For this we may assume that $A = \mathrm{diag}(\lambda_1, \lambda_2, \lambda_3)$ and $A' = \mathrm{diag}(\lambda'_1, \lambda'_2, \lambda'_3)$, since all eigenvalues of $A,A'$ are distinct, so that $s,s'$ belong to the section $S$. Recall that the invariants $p_X, p_Y, p_Z$, by construction, restrict to the invariants $X,Y,Z$, which separate the orbits in general position of the standard representation of the Weyl group $W(S)$. Therefore, after replacing $U$ by a possibly smaller open subset, we may assume that $v = \gamma v'$, for some $\gamma \in W(S)$. Note that the Weyl group $W(S)$ preserves the section $S$, so that $A,A'$ stay diagonal after conjugation by $\gamma$. Therefore we may assume that $v = v'$ and that $A, A'$ are diagonal. But then the entries of $A$ must be the same as the entries of $A'$, up to a permutation, since $\tr(A), \tr(A^2), \det(A)$ are the same on $A$ and $A'$. This shows that $s,s'$ are in the same orbit. 

\end{proof}

\begin{remark}
The ideas of Sections \ref{section:LMMstates} and \ref{section:symStates} can be combined to prove rationality for the full space of mixed state of two qubits. In particular, it can be shown that when $X = \mathscr{L}$ is the full Liouville space of two mixed qubits, the subspace  

\[
S = \left\{ \left( \begin{tabular}{c|c}
 1 & $v^t$ \\
 \hline
 $u$ & $C$ \\ 
 \end{tabular} \right) \in B\mathscr L : C  \text{ is diagonal  }  \right\} \subseteq B\mathscr L
\]
is a relative section, whose Weyl group is a non-abelian group of order 96. The invariant open subset where the discriminant of the characteristic polynomial of $CC^t$ is non-vanishing can be used to construct explicit geometric quotients. With some work, this approach leads to a proof of rationality and provides explicit generators for the field of rational invariants, although these expressions are extremely complicated, and of little practical use.    

In the forthcoming work \cite{SecondPaper}, we provide instead a different relative section $S'\subseteq \mathscr{L}$ with {\em abelian} Weyl group. This relative section cannot be employed to construct geometric quotients, but it leads to a simpler proof of rationality and to simpler generators for the field of invariants. Moreover, it lends itself to generalizations to a higher number of qubits. For this reason, we chose to treat this case separately, while referring to the methods of the present article. 

\end{remark}


\newcommand{\etalchar}[1]{$^{#1}$}

\end{document}